\newtheorem{example}{Problem}[section]
\begin{document}

\title{Composite Anderson acceleration method with dynamic window-sizes and optimized damping}

\author[1,2]{Kewang Chen* }

\author[2]{Cornelis Vuik}

% \author[3]{Author Three}

\authormark{K. Chen and C. Vuik}

\address[1]{\orgdiv{College of Mathematics and Statistics}, \orgname{Nanjing University of Information Science and Technology}, \orgaddress{\state{Nanjing, 210044}, \country{China}}}

\address[2]{\orgdiv{Delft Institute of Applied Mathematics}, \orgname{Delft University of Technology}, \orgaddress{\state{Delft, 2628CD}, \country{the Netherlands}}}

% \address[3]{\orgdiv{Org Division}, \orgname{Org Name}, \orgaddress{\state{State name}, \country{Country name}}}

\corres{*Corresponding author.\\ \email{kwchen@nuist.edu.cn}}

%\presentaddress{This is sample for present address text this is sample for present address text}

\abstract[Summary]{In this paper, we propose and analyze a set of fully non-stationary Anderson acceleration algorithms with dynamic window sizes and optimized damping. Although Anderson acceleration (AA) has been used for decades to speed up nonlinear solvers in many applications, most authors are simply using and analyzing the stationary version of Anderson acceleration (sAA) with fixed window size and a constant damping factor. The behavior and potential of the non-stationary version of Anderson acceleration methods remain an open question. Since most efficient linear solvers use composable algorithmic components. Similar ideas can be used for AA to solve nonlinear systems. Thus in the present work, to develop non-stationary Anderson acceleration algorithms, we first propose two systematic ways to dynamically alternate the window size $m$ by composition. One simple way to package sAA(m) with sAA(n) in each iteration is applying sAA(m) and sAA(n) separately and then average their results. It is an additive composite combination. The other more important way is the multiplicative composite combination, which means we apply sAA(m) in the outer loop and apply sAA(n) in the inner loop. By doing this, significant gains can be achieved. Secondly, to make AA to be a fully non-stationary algorithm, we need to combine these strategies with our recent work on the non-stationary Anderson acceleration algorithm with optimized damping (AAoptD), which is another important direction of producing non-stationary AA and nice performance gains have been observed. Moreover, we also investigate the rate of convergence of these non-stationary AA methods under suitable assumptions. Finally, our numerical results show that some of these proposed non-stationary Anderson acceleration algorithms converge faster than the stationary sAA method and they may significantly reduce the storage and time to find the solution in many cases. }

\keywords{Anderson acceleration, fixed-point iteration,  non-stationary}

%\jnlcitation{\cname{%
%\author{K. Chen} and
%\author{C. Vuik}} (\cyear{2022}), 
%\ctitle{A regime analysis of Atlantic winter jet variability applied to evaluate HadGEM3-GC2}, \cjournal{Q.J.R. Meteorol. Soc.}, \cvol{2017;00:1--6}.}

\maketitle

%\footnotetext{\textbf{Abbreviations:} AA, Anderson acceleration; sAA, stationary Anderson acceleration; AAoptD, Anderson acceleration algorithm with optimized damping.}

\section{Introduction} \label{sec:intro}
In this part, we first present a literature review on the development of Anderson acceleration method and its applications. Then we discuss our main motivations and the structure for the present work. In 1962, Anderson \cite{An1965} developed a technique for accelerating the convergence of the Picard iteration associated with a fixed-point problem which is called Extrapolation Algorithm. Since that, this technique has enjoyed remarkable success and wide usage, especially in electronic structure computations, where it is known as Anderson mixing. The technique is now called Anderson acceleration (AA) in the applied mathematics community. In contrast to Picard iteration, which uses only one previous iterate, AA method proceeds by linearly recombining a list of previous iterates in a way such that approximately minimizes the linearized fixed-point residual. The usual general form of Anderson acceleration with damping is given in \cref{alg:Anderson}.

\begin{algorithm}
\caption{Anderson acceleration: $\mathbf{AA(m)}$}
\label{alg:Anderson}
\begin{algorithmic}
\State{Given: $x_0$ and $m\geq 1$.}
\State{Set: $x_{1}=g(x_0).$}
\For{$k=0,1,2,\cdots $}
\State{Set: $m_{k}=\min\{m,k\}$.}
\State{Set: $F_k=(f_{k-m_k},\cdots,f_k)$, where $f_i=g(x_i)-x_i$.}
\State{Determine: $\alpha^{(k)}=\left(\alpha_0^{(k)},\cdots,\alpha_{m_k}^{(k)}\right)^{T}$ that solves }
\State{$\ \ \ \ \ \ \ \ \ \ \ \displaystyle \min_{\alpha=(\alpha_0,\cdots,\alpha_{m_k})^{T}}\|F_k\alpha\|_2$ $\ s.\ t.$ $\displaystyle\sum_{i=0}^{m_k}\alpha_i=1.$}
\State{Set: $\displaystyle x_{k+1}=(1-\beta_k)\sum_{i=0}^{m_k}\alpha_{i}^{(k)}x_{k-m_k+i}+\beta_k\sum_{i=0}^{m_k}\alpha_{i}^{(k)}g(x_{k-m_k+i})$.}
\EndFor
\end{algorithmic}
\end{algorithm}
Here, $f_k$ is the residual for the $k$th iteration; $m$ is the window size which indicates how many history residuals will be used in the algorithm. It is usually a fixed number during the procedure. The value of $m$ is typically no larger than $3$ in the early days of applications and now this value could be as large as up to 100.\cite{An2019} $\beta_k\in (0,1]$ is a damping factor (or a relaxation parameter) at $k$th iteration. We have, for a fixed window size $m$:
\begin{equation*}
   \beta_k=
    \begin{cases}
      1,  & \text{no damping,}\\
      \beta, \  (\text{a constant independent of $k$}) & \text{stationary Anderson acceleration,}\\
      \beta_k,\ (\text{depending on $k$}) \  & \text{non-stationary Anderson acceleration.}
    \end{cases}       
\end{equation*}
We can also formulate the above constrained optimization problem as an equivalent unconstrained least-squares problem \cite{ToKe2015,wa2011}:
\begin{equation}
\displaystyle \min_{(\omega_1,\cdots,\omega_{m_k})^{T}}\left\|f_k+\sum_{i=1}^{m_k}\omega_i(f_{k-i}-f_k)\right\|_2
\end{equation}
One can easily recover the original problem by setting
$$\omega_0=1-\sum_{i=1}^{m_k}\omega_i.$$
%This formulation of the linear least-squares problem is not optimal for implementation, we will discuss this in more detail in \cref{sec:proof}.

Anderson acceleration methods are ``essentially equivalent'' to the nonlinear GMRES methods \cite{CaMi1998,Mi2005,OoWa2000,WaOo1997,WaNi2011} and the direct inversion on the iterative subspace method (DIIS).\cite{LiYa2013,Pu1980,Pu1982} It is also in a broad category with methods based on quasi-Newton updating.\cite{EiNe1987,Ey1996,FaSa2009,Ha2010,Ya2009} For example, Walker and Ni \cite{WaNi2011} showed that AA without truncation is equivalent in a certain sense to the GMRES method on linear problems and Fang and Saad \cite{FaSa2009} had clarified a remarkable relationship of AA to quasi-Newton methods. However, unlike Newton-like methods, one advantage of AA is that it does not require the expensive computation or approximation of Jacobians or Jacobian-vector products. Although AA has been used for decades, convergence analysis has been reported only recently. For the linear case, Toth and Kelley \cite{ToKe2015} first proved the stationary version of AA (sAA) without damping is locally r-linearly convergent if the fixed point map is a contraction and the coefficients in the linear combination remain bounded. Later, Evens et al. \cite{Ev2020} extended the result to AA with damping factors and proved the convergence rate is $\theta_k((1-\beta_{k-1})+\beta_{k-1}\kappa)$, where $\kappa$ is the Lipschitz constant for the function $g(x)$ and $\theta_k$ is the ratio quantifying the convergence gain provided by AA in step $k$. Recently, in 2019, Pollock et al. \cite{Po2019} applied sAA to the Picard iteration for solving steady incompressible Navier–Stokes equations and proved that the acceleration improves the convergence rate of the Picard iteration. More recently, De Sterck \cite{DeHe2021} extended the result to more general fixed-point iteration $x=g(x)$, given knowledge of the spectrum of $g'(x)$ at fixed-point $x^*$ and Wang et al. \cite{WaSt2021} extended the result to study the asymptotic linear convergence speed of sAA applied to Alternating Direction Method of Multipliers (ADMM) method. Sharper local convergence results of AA remain a hot research topic in this area. For more related results about Anderson acceleration and its applications, we refer the interested readers to papers\cite{BiKe2021,Br2015,YuLi2018,WeGa2019,To2017,Ya2021,Zh2020} and references therein. 

Although AA has been widely used and studied for decades, most authors are simply using and analyzing the stationary version of Anderson acceleration with fixed window size and a constant damping factor. The behavior and potential of the non-stationary versions of the Anderson acceleration method have not been deeply studied and few results have been reported. Besides, the early days of the Anderson Mixing method (the 1980s, for electronic structure calculations) initially dictated the window size $m\leq 3$ due to the storage limitations and costly $g$ evaluations. However, in recent years and a broad range of contexts, the window size $m$ ranging from $20$ to $100$ has also been considered by many authors. For example, Walker and Ni \cite{WaNi2011} used $m=50$ to solve the nonlinear Bratu problem. A natural question is that should we try to further speed up Anderson acceleration method or try to use a larger size of the window? No such comparison results have been reported. As we know, there are two main possible directions for producing non-stationary AA. One is choosing different damping factors $\beta_k$ in each iteration, see our recent work on the non-stationary Anderson acceleration algorithm with optimized damping (AAoptD).\cite{Chen2022} The other way of making AA to be a non-stationary algorithm is to alternate the window size during iterations. But no systematic ways have been proposed to dynamically alternate the window size $m$. Since most efficient linear solvers use composable algorithmic components,\cite{Brown2012,Kirby2018} similar ideas can be used for AA(m) and AA(n) to solve nonlinear systems. Moreover, the combination of choosing optimized damping factors and alternating window sizes may lead to significant gains over the stationary AA. Therefore, in the present work, we propose and study the fully non-stationary Anderson acceleration algorithms with dynamic window sizes and optimized damping. 

% The outline is not required, but we show an example here.
The paper is organized as follows. Our new algorithms and analysis are developed in
\Cref{sec:main}; the rate of convergence to those algorithms are studied in \Cref{sec:theory}; experimental results and discussion are in \Cref{sec:experiments}; conclusions follow in \Cref{sec:conclusions}.

\section{Fully non-stationary Anderson acceleration}\label{sec:main}
\subsection{Non-stationary AA with dynamic window sizes} 
\label{subsec:main-1}
In this part, we propose two systematical ways to dynamically alternate the window size $m$ in sAA. The natural idea is to package $sAA(m)$ with $sAA(n)$ in each iteration. To begin with, an easy additive composite combination may be written as:
$$x_{k+1}=\alpha_m\times sAA(x_{k},\cdots,x_{k-m},m)+\alpha_n \times sAA(x_k,\cdots,x_{k-n},n),$$ 
where $\alpha_m+\alpha_n=1$. For example, one can take $\alpha_m=1/2,\alpha_n=1/2$. We obtain the following algorithm $AA(m+n)$ as in \Cref{alg:Anderson_Anderson_sum}.

\begin{algorithm}
\caption{Anderson acceleration with dynamic window sizes: $\mathbf{AA(m+n)}$}
\label{alg:Anderson_Anderson_sum}
\begin{algorithmic}
\State{Given: $x_0$, $\alpha_m$, $\alpha_n$ and $m,n$.}
\State{Set: $x_{1}=g(x_0).$}
\For{$k=0,1,2,\cdots, $}
\State{\textbf{Set:} $\mathbf{x_{k+1/2}^m\leftarrow}$ \textbf{applying} $\mathbf{sAA(m)}$ \textbf{as given in \cref{alg:Anderson}.}}
\State{\textbf{Set:} $\mathbf{x_{k+1/2}^n\leftarrow}$ \textbf{applying} $\mathbf{sAA(n)}$ \textbf{as given in \cref{alg:Anderson}.}}
\State{\textbf{Set:} $\mathbf{x_{k+1}=\alpha_m x_{k+1/2}^m+\alpha_n x_{k+1/2}^n}$.}
\EndFor
\end{algorithmic}
\end{algorithm}
Without loss of generality, we assume here that $m>n$, so the total amount of work is less than that of applying $AA(m)$ twice. For example, we can choose window size $n=0$ (the Picard iteration) or $n=1$. To compare the performance of $AA(m+n)$ with regular stationary $AA(m)$, we can compare it with applying $AA(m)$ twice or compare the total time needed for each method to find the solution. 

The other more important way is the multiplicative composite combination, which means we apply sAA(m) first in the outer loop and then apply sAA(n) in the inner loop. The idea is similar to the hybrid method GMRESR (GMRES Recursive), which is proposed by Van der Vorst and Vuik\cite{vvuik94} and further investigated by Vuik\cite{vuik93}. We start with composite sAA(m) with $sAA(0)$ (i.e. Picard iteration) in each iteration. This means that after applying $sAA(m)$ without damping, we get,
$$\displaystyle x_{k+1/2}=\sum_{i=0}^{m_k}\alpha_{i}^{(k)}g(x_{k-m_k+i}).$$
Then, we take the result $x_{k+1/2}$ as an input and apply Picard iteration. That is,{}
$$x_{k+1}=g(x_{k+1/2}).$$
Putting these together, we have the following non-stationary algorithm $AA(m,AA(0))$ as in \Cref{alg:Anderson_picard_mul}.
\begin{algorithm}
\caption{Anderson acceleration with dynamic window-sizes: $\mathbf{AA(m,AA(0))}$}
\label{alg:Anderson_picard_mul}
\begin{algorithmic}
\State{Given: $x_0$ $iterM$, $iterN$ and $m\geq 1$.}
\State{Set: $x_{1}=g(x_0).$}
\For{$k=0,1,2,\cdots,iterM $}
\State{\textbf{Set:} $\mathbf{x_{k+1/2}^m\leftarrow}$ \textbf{applying} $\mathbf{sAA(m)}$ \textbf{as given in \Cref{alg:Anderson}.}}
\State{\textbf{Set:} $x_{inner\_initial}=x_{k+1/2}^m$}
\For{$j=0,1,2,\cdots, iterN$}
\State{\textbf{Set:} $\mathbf{x_{j+1}\leftarrow}$ \textbf{applying Picard iteration}.}
\EndFor
\State{\textbf{Set:} $x_{k+1}=x_{iterN}$}
\EndFor
\end{algorithmic}
\end{algorithm}
Suppose we just do a single inner loop iteration in \Cref{alg:Anderson_picard_mul}, the total amount of work of $AA(m,AA(0))$ in each iteration is much less than that of applying $sAA(m)$ twice. \Cref{alg:Anderson_picard_mul} also means that we may `turn off' the acceleration for a while and then turn on the acceleration. However, the performance can be better than $sAA(m)$, see our numerical experiments in \Cref{sec:experiments}.

More generally, we apply $sAA(m)$ in the outer loop and apply sAA(n) in the inner loop. So, in each iteration, after applying $sAA(m)$, we get,
$$\displaystyle x_{k+1/2}=\sum_{i=0}^{m_k}\alpha_{i}^{(k)}g(x_{k-m_k+i}).$$
Then we apply $sAA(n)$ with the initial guess $x_0=x_{k+1/2}$ for $iterN$ iterations:
$$x_{k+1}\leftarrow \rm applying\ sAA(n)\ with\ intial\ guess\ x_0=x_{k+1/2}.$$
In other words, the multiplicative composition reads
$$x_{k+1}=sAA(m,sAA(n)).$$
We summarize this in the following algorithm in \Cref{alg:Anderson_AA_mul}.
\begin{algorithm}
\caption{Anderson acceleration with dynamic window-sizes: $\mathbf{AA(m,AA(n))}$}
\label{alg:Anderson_AA_mul}
\begin{algorithmic}
\State{Given: $x_0$, $m$, $n$, $iterM$ and $iterN$.}
\State{Set: $x_{1}=g(x_0).$}
\For{$k=0,1,2,\cdots \ iterM $}
\State{\textbf{Set:} $\mathbf{x_{k+1/2}\leftarrow}$ \textbf{applying} $\mathbf{sAA(m)}$ \textbf{as given in \Cref{alg:Anderson}.}}
\State{\textbf{Set:} $x_{inner\_initial}=x_{k+1/2}$}
\For{$j=0,1,2,\cdots \ iterN$}
\State{\textbf{Set:} $\mathbf{x_{j+1}\leftarrow}$ \textbf{applying} $\mathbf{sAA(n)}$ \textbf{as given in \Cref{alg:Anderson}.}}
\EndFor
\State{\textbf{Set:} $x_{k+1}=x_{iterN}$}
\EndFor
\end{algorithmic}
\end{algorithm}
\begin{remark}There is a lot of variety here. Let $m$ and $n$ be the window size used in the outer loop and inner loop, respectively. And $iterM$ and $iterN$ be the total numbers of iterations for the outer loop and inner loop, respectively. In the present work, we report some results for the case where $m>n$ and $iterM\gg iterN$, which means the window size used in the inner loop is smaller than that used in the outer loop and the maximum iterations of the inner loop is much smaller than that of the outer loop.  For example, one can choose $n=1$ and $iterN=1$. In this case, we could compare the convergence results of $AA(m,AA(1))$ with that of applying $2*iterM$ times of $AA(m)$. Another way to compare them is to calculate the residual per function evaluation. See more discussions in \Cref{sec:experiments}. 
\end{remark}

Moreover, we summarize the memory requirements for those algorithms in \Cref{table_0}. For some problems, memory storage could be crucial. Our numerical results show that the non-stationary AA methods with smaller window sizes usually perform better than the stationary AA algorithm with very large window sizes, which means our proposed non-stationary AA methods may significantly reduce the storage requirements. See more discussions in our numerical experiments in \Cref{sec:experiments}.

\begin{table}[ht]\label{table_0}
\caption{Memory requirements} % title of Table
\centering % used for centering table
\begin{tabular}{c c} % centered columns (4 columns)
\hline\hline %inserts double horizontal lines
Methods& \ \ \ \ \ \ \ memory  \\ [0.1ex] % inserts table
%heading
\hline % inserts single horizontal line
AA(m) &  $m+1$ vectors in memory\\
AA(m)+AA(n)& $\max\{m+1,n+1\}$ vectors in memory\\
AA(m,AA(n)) & $m+n+2$ vectors in memory  \\[0.5ex] % [1ex] adds vertical space
\hline %inserts single line
\end{tabular}
\label{table:t0} % is used to refer this table in the text
\end{table}

\subsection{Non-stationary AA with optimized damping}
\label{subsec:main-2}
In this part, we briefly summarize some of our recent works on Anderson acceleration with optimized damping (AAoptD), which will be used to produce a fully non-stationary Anderson acceleration in the next section. Suppose that $\beta_k$ is different at each iteration $k$, then we have
\begin{eqnarray}\label{eq:1}
x_{k+1}&=&(1-\beta_k)\sum_{i=0}^{m_k}\alpha_{i}^{(k)}x_{k-m_k+i}+\beta_k\sum_{i=0}^{m_k}\alpha_{i}^{(k)}g(x_{k-m_k+i})\nonumber\\
&=&\sum_{i=0}^{m_k}\alpha_{i}^{(k)}x_{k-m_k+i}+\beta_k\left(\sum_{i=0}^{m_k}\alpha_{i}^{(k)}g(x_{k-m_k+i})-\sum_{i=0}^{m_k}\alpha_{i}^{(k)}x_{k-m_k+i}\right).
\end{eqnarray}
Let us define the following averages given by the solution $\alpha^{k}$ to the optimization problem by
\begin{equation}\label{eq:2}
x_{k}^{\alpha}=\sum_{i=0}^{m_k}\alpha_{i}^{(k)}x_{k-m_k+i},\ \ \ \tilde{x}_{k}^{\alpha}=\sum_{i=0}^{m_k}\alpha_{i}^{(k)}g(x_{k-m_k+i}).
\end{equation}
Thus, \eqref{eq:1} becomes
\begin{equation}
x_{k+1}=x_{k}^{\alpha}+\beta_k(\tilde{x}_{k}^{\alpha}-x_{k}^{\alpha}).
\end{equation}
A natural way to choose ``best'' $\beta_k$ at this stage is that choosing $\beta_k$ such that $x_{k+1}$ gives a minimal residual. So we just need to solve the following unconstrained optimization problem:
\begin{equation}\label{eq:3}
\min_{\beta_k}\|x_{k+1}-g(x_{k+1})\|_2=\min_{\beta_k}\|x_{k}^{\alpha}+\beta_k(\tilde{x}_{k}^{\alpha}-x_{k}^{\alpha})-g(x_{k}^{\alpha}+\beta_k(\tilde{x}_{k}^{\alpha}-x_{k}^{\alpha}))\|_2.
\end{equation}
Using the fact that
\begin{eqnarray}
g(x_{k}^{\alpha}+\beta_k(\tilde{x}_{k}^{\alpha}-x_{k}^{\alpha}))&\approx&g(x_{k}^{\alpha})+\beta_k\frac{\partial g}{\partial x}\Big|_{x_k^{\alpha}}(\tilde{x}_{k}^{\alpha}-x_{k}^{\alpha})\nonumber\\
&\approx&g(x_{k}^{\alpha})+\beta_k\left(g(\tilde{x}_{k}^{\alpha})-g(x_{k}^{\alpha})\right).
\end{eqnarray}
Therefore, \eqref{eq:3} becomes
\begin{eqnarray}\label{eq:4}
\min_{\beta_k}\|x_{k+1}-g(x_{k+1})\|_2&=&\min_{\beta_k}\|x_{k}^{\alpha}+\beta_k(\tilde{x}_{k}^{\alpha}-x_{k}^{\alpha})-g(x_{k}^{\alpha}+\beta_k(\tilde{x}_{k}^{\alpha}-x_{k}^{\alpha}))\|_2\nonumber\\
&\approx&\min_{\beta_k}\|x_{k}^{\alpha}+\beta_k(\tilde{x}_{k}^{\alpha}-x_{k}^{\alpha})-\left[g(x_{k}^{\alpha})+\beta_k(g(\tilde{x}_{k}^{\alpha})-g(x_{k}^{\alpha}))\right]\|_2\nonumber\\
&\approx&\min_{\beta_k}\|\left(x_{k}^{\alpha}-g(x_{k}^{\alpha})\right)-\beta_k\left[(g(\tilde{x}_{k}^{\alpha})-g(x_{k}^{\alpha}))-(\tilde{x}_{k}^{\alpha}-x_{k}^{\alpha})\right]\|_2.
\end{eqnarray}
Thus, we just need to calculate the projection
\begin{equation}\label{eq:55}
\beta_k=\Big|\frac{\left(x_{k}^{\alpha}-g(x_{k}^{\alpha})\right)\cdot\left[\left(x_{k}^{\alpha}-g(x_{k}^{\alpha})\right)-(\tilde{x}_{k}^{\alpha}-g(\tilde{x}_{k}^{\alpha}))\right]}{\|\left[\left(x_{k}^{\alpha}-g(x_{k}^{\alpha})\right)-(\tilde{x}_{k}^{\alpha}-g(\tilde{x}_{k}^{\alpha}))\right]\|_2}\Big|.
\end{equation}

Anderson acceleration algorithm with optimized damping $AAoptD(m)$ reads as follows:
\begin{algorithm}
\caption{Anderson acceleration with optimized dampings: $\mathbf{AAoptD(m)}$}
\label{alg:AAoptD}
\begin{algorithmic}
\State{Given: $x_0$ and $m\geq 1$.}
\State{Set: $x_{1}=g(x_0).$}
\For{$k=0,1,2,\cdots $}
\State{Set: $m_{k}=\min\{m,k\}$.}
\State{Set: $F_k=(f_{k-m_k},\cdots,f_k)$, where $f_i=g(x_i)-x_i$.}
  \State{Determine: $\alpha^{(k)}=\left(\alpha_0^{(k},\cdots,\alpha_{m_k}^{(k)}\right)^{T}$ that solves }
\State{$\ \ \ \ \ \ \ \ \ \ \ \displaystyle \min_{\alpha=(\alpha_0,\cdots,\alpha_{m_k})^{T}}\|F_k\alpha\|_2$ $\ s.\ t.$ $\displaystyle\sum_{i=0}^{m_k}\alpha_i=1.$}
\State{Set: $\ \ \displaystyle x_{k}^{\alpha}=\sum_{i=0}^{m_k}\alpha_{i}^{(k)}x_{k-m_k+i},\ \ \ \tilde{x}_{k}^{\alpha}=\sum_{i=0}^{m_k}\alpha_{i}^{(k)}g(x_{k-m_k+i}).$}
\State{Set: $\ \ \displaystyle {r_p}=\left(x_{k}^{\alpha}-g(x_{k}^{\alpha})\right),\ \ {r_q}=\left(\tilde{x}_{k}^{\alpha}-g(\tilde{x}_{k}^{\alpha})\right)$.}
\State{Set: $\ \ \ \ \ \ \ \ \ \ \ \displaystyle \beta_k=\frac{(r_p-r_q)^{T}r_p}{\|r_p-r_q\|_2}$.}
\State{Set: $ \ \ \displaystyle x_{k+1}=x_{k}^{\alpha}+\beta_k(\tilde{x}_{k}^{\alpha}-x_{k}^{\alpha})$.}
\EndFor
\end{algorithmic}
\end{algorithm}

\begin{remark}
This optimized damping step is a ``local optimal'' strategy at $k$th iteration. It usually will speed up the convergence rate compared with an undamped one, but not always. Because in $(k+1)$th iteration, it uses a combination of all previous m history information. Moreover, when $\beta_k$ is very close to zero, the system is overdamped, which sometimes may also slow down the convergence speed. We may need to further modify our $\beta_k$ to bound it away from zero by using
\begin{equation}\label{damp2}
\hat{\beta}_k =\max\{\beta_k,\eta\},
\end{equation}
or \begin{equation}\label{damp1}
\hat{\beta}_k =\begin{cases}
\beta_k &\text{if $\beta_k \geq \eta$},\\
1-\beta_k &\text{if $\beta_k< \eta$}.\\
\end{cases}
\end{equation}
where $\eta$ is a small positive number such that $0<\eta<0.5$. For more details on the implementation of $AAoptD(m)$ and its performance, we refer the readers to our recent paper.\cite{Chen2022} 
\end{remark}

\subsection{Fully Non-stationary AA with dynamic window-sizes and optimized damping}
\label{subsec:main-3} At this stag{}e, we are ready to present our final fully non-stationary Anderson acceleration algorithms. Since alternating the window sizes and using different damping factors in each iteration are two main ways to produce non-stationary Anderson acceleration, we need to combine those strategies to make it into a fully non-stationary algorithm. In this part, the additive composition and the multiplicative composition are used to combine $AA(m)$ and $AAoptD(n)$. For example, we can obtain the following fully non-stationary algorithm $AA(m)+AAoptD(n)$ as in \Cref{alg:Anderson_AAoptD_sum} and $AA(m,AAoptD(n))$ as in \Cref{alg:Anderson_AAoptD_mul} by using additive and multiplicative composite combination, respectively.
\begin{algorithm}
\caption{Fully non-stationary Anderson acceleration: $\mathbf{AA(m)+AAoptD(n)}$}
\label{alg:Anderson_AAoptD_sum}
\begin{algorithmic}
\State{Given: $x_0$, $\alpha_m$, $\alpha_n$ and $m,n$.}
\State{Set: $x_{1}=g(x_0).$}
\For{$k=0,1,2,\cdots, $}
\State{\textbf{Set:} $\mathbf{x_{k+1/2}^m\leftarrow}$ \textbf{applying} $\mathbf{sAA(m)}$ \textbf{as given in \Cref{alg:Anderson}.}}
\State{\textbf{Set:} $\mathbf{x_{k+1/2}^n\leftarrow}$ \textbf{applying} $\mathbf{AAoptD(n)}$ \textbf{as given in \Cref{alg:AAoptD}.}}
\State{\textbf{Set:} $\mathbf{x_{k+1}=\alpha_m x_{k+1/2}^m+\alpha_n x_{k+1/2}^n}$.}
\EndFor
\end{algorithmic}
\end{algorithm}
\begin{algorithm}
\caption{Fully non-stationary Anderson acceleration: $\mathbf{AA(m,AAoptD(n))}$}
\label{alg:Anderson_AAoptD_mul}
\begin{algorithmic}
\State{Given: $x_0$, $m$ $n$ $iterM$ and $iterN$.}
\State{Set: $x_{1}=g(x_0).$}
\For{$k=0,1,2,\cdots  \ iterM$}
\State{\textbf{Set:} $\mathbf{x_{k+1/2}\leftarrow}$ \textbf{applying} $\mathbf{sAA(m)}$ \textbf{as given in \Cref{alg:Anderson}.}}
\State{\textbf{Set:} $x_{inner\_initial}=x_{k+1/2}$}
\For{$j=0,1,2,\cdots \ iterN$}
\State{\textbf{Set:} $\mathbf{x_{j+1}\leftarrow}$ \textbf{applying} $\mathbf{AAoptD(n)}$ \textbf{as given in \Cref{alg:AAoptD}.}}
\EndFor
\State{\textbf{Set:} $x_{k+1}=x_{iterN}$}
\EndFor
\end{algorithmic}
\end{algorithm}

Similarly, $AAoptD(m+n)$, $AAoptD(m,AAoptD(n))$ or 
$AAoptD(m,AA(n))$ can be easily obtained. Now, we have a set of new non-stationary AA algorithms. We test and compare the performance of some of these methods with stationary AA in \Cref{sec:experiments}. Suggestions on how to use and choose these methods are provided in our final conclusion part in \Cref{sec:conclusions}.

\section{Convergence rate}\label{sec:theory}
In this section, we investigate the rate of convergence of these non-stationary Anderson acceleration methods. The main technical results and assumptions are adopted from papers\cite{ToKe2015,Ev2020} with necessary modifications. Here we provide the convergence theorems for $AAoptD(m)$, $AA(m,AA(1))$ with inner loop iteration $iterN=1$ and $AAoptD(m,AA(1))$ with inner loop iteration $iterN=1$. These typical non-stationary AA methods are extensively studied in \Cref{sec:experiments}. Similarly, one can derive the rate of convergence to other non-stationary AA methods.

To begin with, we summarize the convergence result in \Cref{thm:bigthm} for Anderson acceleration with optimized damping as in \Cref{alg:AAoptD}. 
\begin{theorem}\label{thm:bigthm}Assume that
\begin{itemize}
 \item $g: R^n\rightarrow R^n$ has a fixed point $x^*\in R^n$ such that $g(x^*)=x^*$. 
 \item $g$ is uniformly Lipschitz continuously differentiable in the ball $B(\rho)=\{u|\|x-x^*\|_2\leq\rho\}$.
 \item There exists $\kappa\in(0,1)$ such that $\|g(y)-g(x)\|_2\leq\kappa\|y-x\|_2$ for all $x, y\in R^n$. 
 \item Suppose that $\exists M$ and $\epsilon>0$ such that for all $k>m$, $\sum_{i=0}^{m-1}|\alpha_i|<M$ and $|\alpha_m|\geq\epsilon$. 
\end{itemize}
 Then we have the following convergence rate for $AAoptD(m)$ given in \Cref{alg:AAoptD}:
\begin{equation}
\|f(x_{k+1})\|_2\leq\theta_{k+1}\left[(1-\beta_k)+\kappa\beta_k\right]\|f(x_k)\|_2+\sum_{i=0}^{m} O(\|f(x_{k-m+i})\|_2^2),
\end{equation}
where the average gain
$$\theta_{k+1}=\frac{\|\sum_{i=0}^{m}\alpha_i^kf(x_{k-m+i})\|_2}{\|f(x_k)\|_2}$$
and
$$
\beta_k=\Big|\frac{\left(x_{k}^{\alpha}-g(x_{k}^{\alpha})\right)\cdot\left[\left(x_{k}^{\alpha}-g(x_{k}^{\alpha})\right)-(\tilde{x}_{k}^{\alpha}-g(\tilde{x}_{k}^{\alpha}))\right]}{\|\left[\left(x_{k}^{\alpha}-g(x_{k}^{\alpha})\right)-(\tilde{x}_{k}^{\alpha}-g(\tilde{x}_{k}^{\alpha}))\right]\|_2}\Big|
$$
with 
$$x_{k}^{\alpha}=\sum_{i=0}^{m_k}\alpha_{i}^{(k)}x_{k-m_k+i},\ \ \ \tilde{x}_{k}^{\alpha}=\sum_{i=0}^{m_k}\alpha_{i}^{(k)}g(x_{k-m_k+i}).$$
\end{theorem}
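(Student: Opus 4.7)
The plan is to adapt the argument of Evans et al.~\cite{Ev2020} for stationary AA with a fixed damping factor, keeping $\beta_k$ as a free parameter throughout the derivation so that the resulting inequality applies verbatim to the optimized choice used by AAoptD. The argument has three stages: a Taylor expansion of the update $x_{k+1}$ that recasts the residual as a convex combination of two intermediate residuals; separate upper bounds on each of those residuals in terms of $\theta_{k+1}\|f(x_k)\|_2$; and a final triangle-inequality combination.

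First I would rewrite the AAoptD update as $x_{k+1}=(1-\beta_k)x_k^\alpha+\beta_k\tilde x_k^\alpha$. Using that $g$ is Lipschitz continuously differentiable near $x^*$, a Taylor expansion of $g$ about $x_k^\alpha$ gives $g(x_{k+1})=(1-\beta_k)g(x_k^\alpha)+\beta_k g(\tilde x_k^\alpha)+O(\|\tilde x_k^\alpha-x_k^\alpha\|_2^2)$, so subtracting $x_{k+1}$ yields the pivotal identity
\begin{equation*}
f(x_{k+1})=(1-\beta_k)f(x_k^\alpha)+\beta_k f(\tilde x_k^\alpha)+R,
\end{equation*}
with $R$ quadratic in the displacements. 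The problem thus reduces to bounding $\|f(x_k^\alpha)\|_2$ and $\|f(\tilde x_k^\alpha)\|_2$. For $f(x_k^\alpha)$ I would use $\sum_i\alpha_i^{(k)}=1$ to rewrite $f(x_k^\alpha)-\sum_i\alpha_i^{(k)}f(x_{k-m+i})=g(x_k^\alpha)-\sum_i\alpha_i^{(k)}g(x_{k-m+i})$ and Taylor-expand $g$ around $x_k^\alpha$; the first-order term vanishes because $\sum_i\alpha_i^{(k)}(x_{k-m+i}-x_k^\alpha)=0$, and the quadratic remainder becomes $\sum_i O(\|f(x_{k-m+i})\|_2^2)$ after converting displacements to residuals via the standard contraction estimate $\|x-x^*\|_2\le(1-\kappa)^{-1}\|f(x)\|_2$; hence $\|f(x_k^\alpha)\|_2\le\theta_{k+1}\|f(x_k)\|_2+\sum_i O(\|f(x_{k-m+i})\|_2^2)$. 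For $f(\tilde x_k^\alpha)$, setting $v:=\sum_i\alpha_i^{(k)}f(x_{k-m+i})$ so that $\tilde x_k^\alpha=x_k^\alpha+v$ and Taylor-expanding $g(x_k^\alpha+v)$, then cancelling against the previous estimate, reduces the leading term to $g'(x_k^\alpha)v$, whose norm is at most $\kappa\|v\|_2=\kappa\theta_{k+1}\|f(x_k)\|_2$ because the Lipschitz constant $\kappa$ dominates $\|g'\|$.

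Combining the two residual estimates with the convex-combination identity produces the claimed bound $\theta_{k+1}\left[(1-\beta_k)+\kappa\beta_k\right]\|f(x_k)\|_2+\sum_{i=0}^m O(\|f(x_{k-m+i})\|_2^2)$. The main obstacle is not an isolated deep step but the careful bookkeeping of quadratic remainders: one has to ensure every $O(\cdot)$ collapses into $\sum_{i=0}^m O(\|f(x_{k-m+i})\|_2^2)$ without a prefactor that grows with $k$ or $m$. This is precisely where the hypotheses $\sum_i|\alpha_i^{(k)}|<M$ and $|\alpha_m^{(k)}|\ge\epsilon$ enter, giving uniform control of the coefficient vector in each iteration, and where Lipschitz continuous differentiability of $g$ is needed so that Taylor's theorem delivers a genuinely quadratic remainder in the relevant displacements. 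Note that the projection formula defining $\beta_k$ plays no role in the derivation above; the inequality holds for any $\beta_k\in[0,1]$, and the AAoptD choice merely selects the value that, in the linearized picture, realizes a small leading constant on the right-hand side.
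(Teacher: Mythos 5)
Your argument is essentially the one the paper itself relies on: the paper's ``proof'' of this theorem is just a citation to Evans et al.\ for general damping $\beta_k$, together with the observation that the optimized $\beta_k$ from AAoptD is simply one admissible value plugged into that bound --- exactly the role you assign to the projection formula in your last paragraph. Your reconstruction (convex-combination identity for $f(x_{k+1})$, separate bounds on $\|f(x_k^{\alpha})\|_2$ and $\|f(\tilde{x}_k^{\alpha})\|_2$ via Taylor expansion with the affine-combination cancellation, and the $\sum_i|\alpha_i|<M$ hypothesis to absorb the quadratic remainders) is a correct and in fact more detailed account than the paper provides, so there is nothing to flag.
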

\begin{proof} The proof of this theorem can be found in this paper\cite{Ev2020} for general damping $\beta_k$. The key ideas of the analysis are relating the difference of consecutive iterates to residuals based on performing the inner-optimization and explicitly defining the gain in the optimization stage to be the ratio of improvement over a step of the unaccelerated fixed-point iteration. Additionally, here we also need to use \eqref{eq:1} and \eqref{eq:55} to explicitly calculate these optimized $\beta_k$.
\end{proof}

Next, we provide the convergence rate for non-stationary Anderson acceleration methods $AA(m,AA(1))$ with inner loop iteration $iterN=1$ and $AAoptD(m,AA(1))$ with inner loop iteration $iterN=1$, which are extensively studied in \Cref{sec:experiments}.

\begin{theorem} \label{thm:thm1} Assume that $g: R^n\rightarrow R^n$ has a fixed point $x^*\in R^n$ such that $g(x^*)=x^*$ and satisfies all assumptions in \Cref{thm:bigthm}. Then we have the following convergence rate for $AA(m,AA(1))$ as in \Cref{alg:Anderson_AA_mul} with inner loop iteration $iterN=1$:
\begin{equation}
\|f(x_{k+1})\|_2\leq\bar{\theta}_{1}\theta_{k+1}\kappa^2\|f(x_k)\|_2+high\ order\ terms,
%\sum_{i=0}^{m} O(\|f(x_{k-m+i})\|_2^2)
\end{equation}
where
$$\theta_{k+1}=\frac{\|\sum_{i=0}^{m}\alpha_i^kf(x_{k-m+i})\|_2}{\|f(x_k)\|_2},\ \ \ \bar{\theta}_{1}=\frac{\|\bar{\alpha}_0^1f(\bar{x}_{0})+\bar{\alpha}_1^1f(\bar{x}_{1})\|_2}{\|f(\bar{x}_1)\|_2}$$
with
 $$ \bar{x}_0=x_{k+1/2}, \ \ \ \bar{x}_1=g(\bar{x}_0).$$
\end{theorem}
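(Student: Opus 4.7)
The plan is to peel off the two constituent pieces of one outer iteration of $AA(m,AA(1))$ and apply \Cref{thm:bigthm} (in its undamped form, $\beta=1$) to each. Write $\bar{x}_0 := x_{k+1/2}$ for the intermediate iterate produced by the outer $sAA(m)$ step, and $\bar{x}_1 := g(\bar{x}_0)$ for the Picard iterate used to initialize the inner $sAA(1)$ procedure. With $iterN=1$ the inner procedure then produces
\[
x_{k+1} = \bar{\alpha}_0^{1}\, g(\bar{x}_0) + \bar{\alpha}_1^{1}\, g(\bar{x}_1),
\]
where $(\bar{\alpha}_0^{1},\bar{\alpha}_1^{1})$ are the inner AA(1) weights obtained by residual minimization under $\bar{\alpha}_0^{1}+\bar{\alpha}_1^{1}=1$.

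First, I would apply \Cref{thm:bigthm} to the outer $sAA(m)$ step with $\beta_k=1$, which gives
\[
\|f(x_{k+1/2})\|_2 \leq \theta_{k+1}\,\kappa\,\|f(x_k)\|_2 + \sum_{i=0}^{m} O(\|f(x_{k-m+i})\|_2^2),
\]
with $\theta_{k+1}$ exactly as in the statement of the theorem. Second, I would view the inner $sAA(1)$ step as an undamped Anderson step of window size $1$ on the two-point history $(\bar{x}_0,\bar{x}_1)$ and invoke \Cref{thm:bigthm} a second time, yielding
\[
\|f(x_{k+1})\|_2 \leq \bar{\theta}_1\,\kappa\,\|f(\bar{x}_1)\|_2 + O(\|f(\bar{x}_0)\|_2^2) + O(\|f(\bar{x}_1)\|_2^2).
\]

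The third ingredient is a bridge linking $\|f(\bar{x}_1)\|_2$ back to $\|f(x_{k+1/2})\|_2$. Since $\bar{x}_1 = g(x_{k+1/2})$, Lipschitz contractivity yields $\|f(\bar{x}_1)\|_2 \leq \kappa\,\|f(x_{k+1/2})\|_2 \leq \|f(x_{k+1/2})\|_2$; using the weaker (but sufficient) second inequality and chaining with the outer bound produces precisely the advertised leading term $\bar{\theta}_1\,\theta_{k+1}\,\kappa^2 \|f(x_k)\|_2$. The second-order Taylor remainders from both invocations of \Cref{thm:bigthm}, together with the quadratic corrections introduced by the bridge, are then collected into the single symbol \emph{high order terms}.

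The main obstacle is the bookkeeping of these higher-order contributions. The inner application of \Cref{thm:bigthm} produces remainders that are quadratic in $\|f(\bar{x}_0)\|_2$ and $\|f(\bar{x}_1)\|_2$, and these histories are themselves outputs of the outer $sAA(m)$ step; to close the estimate one must re-express them in terms of $\|f(x_{k-m+i})\|_2$ via the outer bound and contractivity, and then verify that after composition every remainder remains genuinely quadratic in the residual history so that it can be absorbed into the high order terms without contaminating the leading factor $\bar{\theta}_1 \theta_{k+1} \kappa^2$. A secondary technical point is confirming that the coefficient-boundedness hypothesis of \Cref{thm:bigthm} ($\sum|\alpha_i|<M$, $|\alpha_m|\geq\epsilon$) holds for the outer weights and that the analogous two-dimensional condition on $(\bar{\alpha}_0^{1},\bar{\alpha}_1^{1})$ is either automatic from the normal-equation geometry of AA(1) or must be imposed as an additional assumption.
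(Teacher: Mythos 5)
Your proposal matches the paper's proof essentially step for step: apply \Cref{thm:bigthm} with $\beta_k=1$ to the outer $sAA(m)$ step, apply it again with $m=1$ to the inner $sAA(1)$ step on the history $(\bar{x}_0,\bar{x}_1)$, bridge the two via $\bar{x}_1=g(x_{k+1/2})$ and contractivity (using the weaker bound $\|f(\bar{x}_1)\|_2\leq\|f(x_{k+1/2})\|_2$ so the leading factor is $\kappa^2$), and absorb the quadratic remainders into the high order terms. The technical caveats you raise about re-expressing the inner remainders and the coefficient-boundedness of the inner weights are legitimate but are likewise left implicit in the paper.
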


\begin{proof}
For the outer loop, according to the results in \Cref{thm:bigthm} without damping (i.e. $\beta_k=1$), we have
\begin{equation}\label{kw_0}
\|f(x_{k+1/2})\|_2\leq\theta_{k+1}\kappa\|f(x_k)\|_2+\sum_{i=0}^{m} O(\|f(x_{k-m+i})\|_2^2),
\end{equation}
where
$$\theta_{k+1}=\frac{\|\sum_{i=0}^{m}\alpha_i^kf(x_{k-m+i})\|_2}{\|f(x_k)\|_2}.$$
As $\alpha_i^k$ is the solution to the optimization problem in \Cref{alg:Anderson} and the fact that $\alpha_k^k=1$, $\alpha_j^k=0, j\not=k$, is in the feasible set for the optimization problem, we immediately have
$$0\leq\theta_{k+1}\leq 1.$$
For the inner loop with $iterN=1$, we have the initial guess $\bar{x}_0=x_{k+1/2}$, then $\bar{x}_1=g(\bar{x}_0)$ and
$$f(\bar{x}_0)=g(\bar{x}_0)-\bar{x}_0,\ \ \ f(\bar{x}_1)=g(\bar{x}_1)-\bar{x}_1.$$
Let $\bar{\alpha}_0$ and $\bar{\alpha}_1$ be the solution to the inner loop optimization problem, then applying \Cref{thm:bigthm} with $m=1$ without damping (i.e. $\beta_k=1$), we have
\begin{equation}\label{kw_1}
\|f(\bar{x}_2)\|_2\leq\bar{\theta}_1\kappa\|f(\bar{x}_1)\|+O(\|f(\bar{x}_{1})\|_2^2)+O(\|f(\bar{x}_{0})\|_2^2)
\end{equation}
with
$$ \bar{\theta}_{1}=\frac{\|\bar{\alpha}_0^1f(\bar{x}_{0})+\bar{\alpha}_1^1f(\bar{x}_{1})\|_2}{\|f(\bar{x}_1)\|_2},$$
where $\bar{\alpha}_0^1$ and $\bar{\alpha}_1^1$ solves the optimization problem of $AA(1)$ in the inner loop iteration. Similarly, since $\bar{\alpha}_0^1=0$ and $\bar{\alpha}_1^1=1$, is in the feasible set for the related optimization problem, we get
$$0\leq\bar{\theta}_{1}\leq 1.$$
Using \eqref{kw_0} and the fact that the inner loop use $\bar{x}_0=x_{k+1/2}$ as an initial guess,  we have $\bar{x}_1=g(\bar{x}_0)=g(x_{k+1/2})$. Therefore, 
\begin{equation}\label{kw_3}
\|f(\bar{x}_1)\|_2\leq\theta_{k+1}\kappa\|f(x_k)\|_2+\sum_{i=0}^{m} O(\|f(x_{k-m+i})\|_2^2).
\end{equation}
Since $iterN=1$, so after finishing the inner loop iteration, we will set $x_{k+1}=\bar{x}_2$. Thus, from \eqref{kw_1} and \eqref{kw_3}, we finally obtain
\begin{equation}
\|f(x_{k+1})\|_2=\|f(\bar{x}_2)\|_2\leq\bar{\theta}_{1}\theta_{k+1}\kappa^2\|f(x_k)\|_2+high\ order\ terms,
%\sum_{i=0}^{m} O(\|f(x_{k-m+i})\|_2^2)
\end{equation}
which completes the proof of this theorem.
\end{proof}

\begin{theorem} \label{thm:thm2} Assume that $g: R^n\rightarrow R^n$ has a fixed point $x^*\in R^n$ such that $g(x^*)=x^*$ and satisfies the assumptions in \Cref{thm:bigthm}. Then we have the following convergence rate for $AAoptD(m,AA(1))$ with inner loop iteration $iterN=1$:
\begin{equation}
\|f(x_{k+1})\|_2\leq\bar{\theta}_{1}\theta_{k+1}\kappa\left[(1-\beta_k)+\kappa\beta_k\right]\|f(x_k)\|_2+high\ order\ terms,
\end{equation}
where
$$
\beta_k=\Big|\frac{\left(x_{k}^{\alpha}-g(x_{k}^{\alpha})\right)\cdot\left[\left(x_{k}^{\alpha}-g(x_{k}^{\alpha})\right)-(\tilde{x}_{k}^{\alpha}-g(\tilde{x}_{k}^{\alpha}))\right]}{\|\left[\left(x_{k}^{\alpha}-g(x_{k}^{\alpha})\right)-(\tilde{x}_{k}^{\alpha}-g(\tilde{x}_{k}^{\alpha}))\right]\|_2}\Big|.
$$
and 
$$\theta_{k+1}=\frac{\|\sum_{i=0}^{m}\alpha_if(x_{k-m+i})\|_2}{\|f(x_k)\|_2},\ \ \ \bar{\theta}_{1}=\frac{\|\bar{\alpha}_0f(\bar{x}_{0})+\bar{\alpha}_1f(\bar{x}_{1})\|_2}{\|f(\bar{x}_1)\|_2}$$
with
 $$ \bar{x}_0=x_{k+1/2}, \ \ \ \bar{x}_1=g(\bar{x}_0).$$
\end{theorem}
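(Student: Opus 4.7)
The plan is to mirror the two-stage argument used for Theorem \ref{thm:thm1}, replacing the undamped outer-loop estimate with the damped estimate from Theorem \ref{thm:bigthm}. Concretely, I would first bound the residual $\|f(x_{k+1/2})\|_2$ produced by the outer $AAoptD(m)$ step, then bound the residual after the single inner $AA(1)$ step, then compose the two bounds. Because the inner loop uses $\bar{x}_0 = x_{k+1/2}$ and $\bar{x}_1 = g(\bar{x}_0)$, I can pass the outer bound directly into the inner one through the intermediate iterate $\bar{x}_1$, exactly as in the proof of Theorem \ref{thm:thm1}.

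For the outer step, applying Theorem \ref{thm:bigthm} with the optimized damping $\beta_k$ yields
\begin{equation*}
\|f(x_{k+1/2})\|_2 \leq \theta_{k+1}\bigl[(1-\beta_k) + \kappa\beta_k\bigr]\|f(x_k)\|_2 + \sum_{i=0}^{m} O\bigl(\|f(x_{k-m+i})\|_2^2\bigr),
\end{equation*}
with $\theta_{k+1}$ and $\beta_k$ exactly as in the theorem statement; the bound $0\leq\theta_{k+1}\leq 1$ follows from feasibility of the unit vector in the constraint set. For the inner step, I would apply Theorem \ref{thm:bigthm} to $AA(1)$ without damping (i.e.\ $\beta = 1$) with iterates $\bar{x}_0,\bar{x}_1$, obtaining
\begin{equation*}
\|f(\bar{x}_2)\|_2 \leq \bar{\theta}_1 \kappa \|f(\bar{x}_1)\|_2 + O\bigl(\|f(\bar{x}_1)\|_2^2\bigr) + O\bigl(\|f(\bar{x}_0)\|_2^2\bigr),
\end{equation*}
again with $0\leq\bar{\theta}_1\leq 1$.

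To glue the two estimates together, I would use the Lipschitz property $\|f(\bar{x}_1)\|_2 = \|g(g(x_{k+1/2})) - g(x_{k+1/2})\|_2 \leq \kappa\|f(x_{k+1/2})\|_2$ (since $f = g - \mathrm{id}$ and $g$ is a $\kappa$-contraction), then substitute the outer bound for $\|f(x_{k+1/2})\|_2$. Setting $x_{k+1} = \bar{x}_2$ and collecting all terms that are quadratic (or higher) in past residuals into the ``higher order terms'' bucket gives
\begin{equation*}
\|f(x_{k+1})\|_2 \leq \bar{\theta}_1\,\theta_{k+1}\,\kappa\bigl[(1-\beta_k) + \kappa\beta_k\bigr]\|f(x_k)\|_2 + \text{higher order terms},
\end{equation*}
which is exactly the claimed bound.

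The only mildly delicate point is the bookkeeping of the higher-order remainders: the outer-loop remainder depends on $\|f(x_{k-m+i})\|_2^2$, and after multiplying by the Lipschitz factor $\kappa$ and the inner-loop gain $\bar{\theta}_1$ these remain quadratic in past residuals and are safely absorbed into ``higher order terms''; similarly the $O(\|f(\bar{x}_1)\|_2^2)$ and $O(\|f(\bar{x}_0)\|_2^2)$ inner-loop remainders are quadratic in $\|f(x_{k+1/2})\|_2$, hence (by the outer-loop bound) quadratic in the past residuals. I do not foresee any essential obstacle beyond this bookkeeping, since the two ingredients — Theorem \ref{thm:bigthm} for each loop and the Lipschitz step that links them — are exactly the tools already used in Theorem \ref{thm:thm1}; the only novelty here is retaining the optimized-damping factor $[(1-\beta_k)+\kappa\beta_k]$ from the outer loop rather than collapsing it to $\kappa$ as in the undamped case.
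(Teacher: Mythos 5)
Your proposal is correct and follows essentially the route the paper intends: the paper omits this proof as ``similar to the proof of \Cref{thm:thm1}'', and your two-stage composition (damped outer estimate from \Cref{thm:bigthm}, undamped inner $AA(1)$ estimate, linked through $\bar{x}_1=g(x_{k+1/2})$) is exactly that argument. The one small discrepancy is that your explicit Lipschitz step $\|f(\bar{x}_1)\|_2\le\kappa\|f(x_{k+1/2})\|_2$ actually yields the sharper prefactor $\bar{\theta}_1\theta_{k+1}\kappa^2\left[(1-\beta_k)+\kappa\beta_k\right]$ rather than ``exactly'' the stated $\bar{\theta}_1\theta_{k+1}\kappa\left[(1-\beta_k)+\kappa\beta_k\right]$; since $\kappa\in(0,1)$ the claimed bound still follows a fortiori, whereas the paper's template in \Cref{thm:thm1} instead bounds $\|f(\bar{x}_1)\|_2$ by the same estimate as $\|f(x_{k+1/2})\|_2$, dropping that extra factor of $\kappa$.
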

\begin{proof}The proof is similar to the proof of \Cref{thm:thm1}, thus we omit it here.
\end{proof}

\section{Experimental results and discussion}
\label{sec:experiments} In this section, we numerically compare the performance of these fully non-stationary Anderson acceleration algorithms with regular stationary AA (with uniform window size and a constant damping factor). All these experiments are done in the MATLAB 2021b environment. MATLAB codes are available upon request to the authors.

This first example is from Walker and Ni's paper,\cite{WaNi2011}  where a stationary Anderson acceleration with window size $m=50$ is used to solve the Bratu problem. This problem has a long history, we refer the reader to Glowinski et al.\cite{Gl1985} and Pernice and Walker,\cite{Pe1998} and the references in those papers.
\begin{example}\textbf{The\ Bratu\ problem.} The Bratu problem is a nonlinear PDE boundary value problem as follows:
\begin{eqnarray*}
\Delta u +\lambda\ e^{u}&=&0,\ \ in \ \ D=[0,1]\times[0,1],\nonumber\\
u&=&0,\ \ on \ \ \partial D. 
\end{eqnarray*}
\end{example}
In this experiment, we used a centered-difference discretization on a $64\times 64$ grid. We take $\lambda=6$ in the Bratu problem and use the zero initial approximate solution in all cases. And we also applied the preconditioning such that the basic Picard iteration still works. The preconditioning matrix that we used here is the diagonal inverse of the matrix $A$, where $A$ is a matrix for the discrete Laplace operator.

We first solve the Bratu problem using the additive composition of non-stationary AA methods. In order to compare the results with stationary AA, we consider the residual achieved in one iteration of fully non-stationary AA methods as it is achieved in ``two iterations'', since additive composition strategy combines two regular AA in one iteration. We already doubled the iterations for non-stationary AA methods in the following plots. 
Here we choose our window sizes $m=20$ and $n=1$, thus the total amount of work is less than that of applying the staionary $AA(20)$ twice. Secondly, we solve the same problem by using the multiplicative composition of non-stationary AA methods. The window sizes for the outer loop and inner loop are $m=20$ and $n=1$, respectively. Here, we apply only one iteration in the inner loop. So we can easily compare their convergence rates with regular AA just like the way used in the case of the additive composition. Moreover, in some cases, function evaluations are more expensive. Since non-stationary AA methods usually involve more function evaluations per iteration, we also plot the residual per function evaluation, which can also help us directly compare the performance of non-stationary AA and stationary AA. The results are shown in \Cref{fig:fig_3}, \Cref{fig:fig_4} and \Cref{fig:fig_new_4}. The total time used for different methods is shown in \Cref{table:t2}. Here we use the Matlab commands $tic$ and $toc$ to record the running time for each of these methods and we have not optimized our Matlab codes yet. 

From \Cref{fig:fig_3}, \Cref{fig:fig_4}, \Cref{fig:fig_new_4} and \Cref{table:t2}, non-stationary Anderson acceleration methods work better than regular stationary AA. The best one is the fully non-stationary Anderson acceleration $AAoptD(20,AA(1))$, which means the window size for the outer loop is $20$ and the window size for the inner loop is $1$. To compare the results in \cite{WaNi2011}, we also solve this problem with larger window size $m$, the convergence result is in \Cref{fig:fig_new_k6}. From \Cref{fig:fig_new_k6}, we find that the non-stationary AA methods with outer window size $m=20$ and inner loop window size $n=1$ are better than stationary AA with window size $m=20$. Moreover, some of these methods (i.e. $AA(20,AA(1))$ and $AAoptD(20,AA(1))$) are comparable with the stationary $AA(50)$. This is very important for solving larger size problems where computer storage is crucial. Our proposed non-stationary AA methods may save lots of memory storage while maintaining a similar (or even faster) convergence rate. 
\begin{figure}[htbp]
  \centering
  \includegraphics[height=3.8in, width=4.5in]{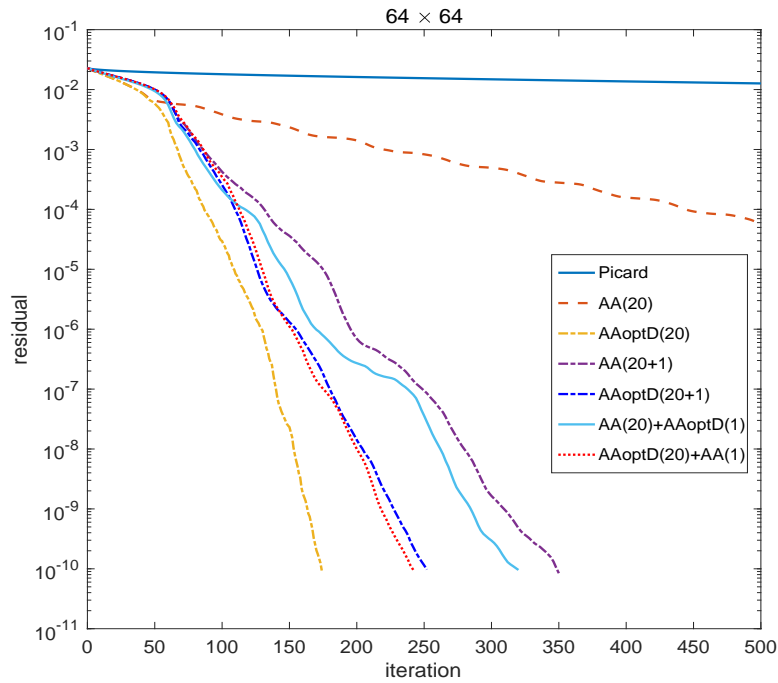}
  \caption{Non-stationary AA methods solve the Bratu problem: additive composition.}
  \label{fig:fig_3}
\end{figure}
\begin{figure}[htbp]
  \centering
  \includegraphics[height=3.8in, width=4.5in]{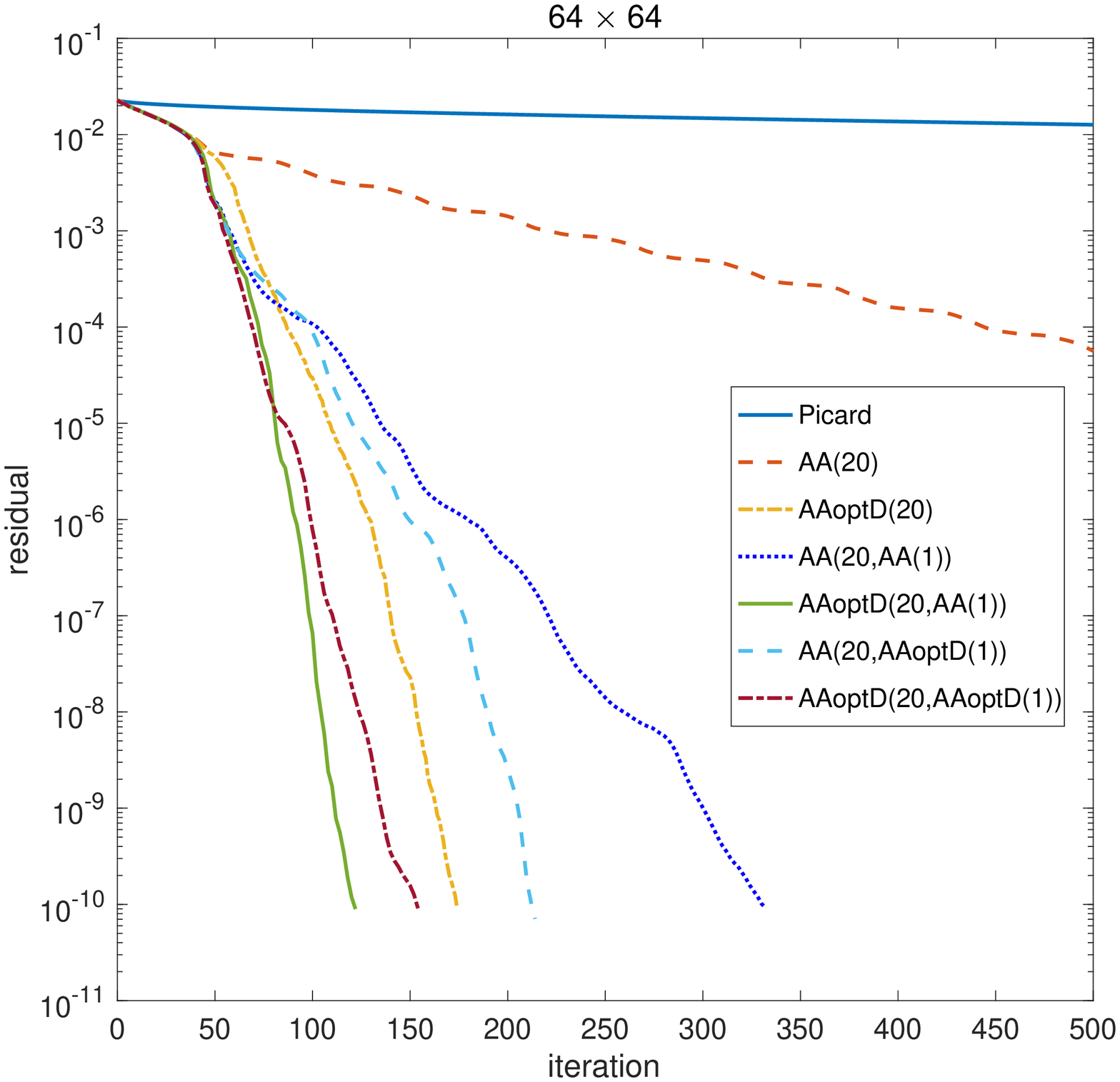}
  \caption{Non-stationary AA methods solve the Bratu problem: multiplicative composition.}
  \label{fig:fig_4}
\end{figure}
\begin{figure}[htbp]
  \centering
  \label{fig:f_new_4}\includegraphics[height=3.8in, width=4.5in]{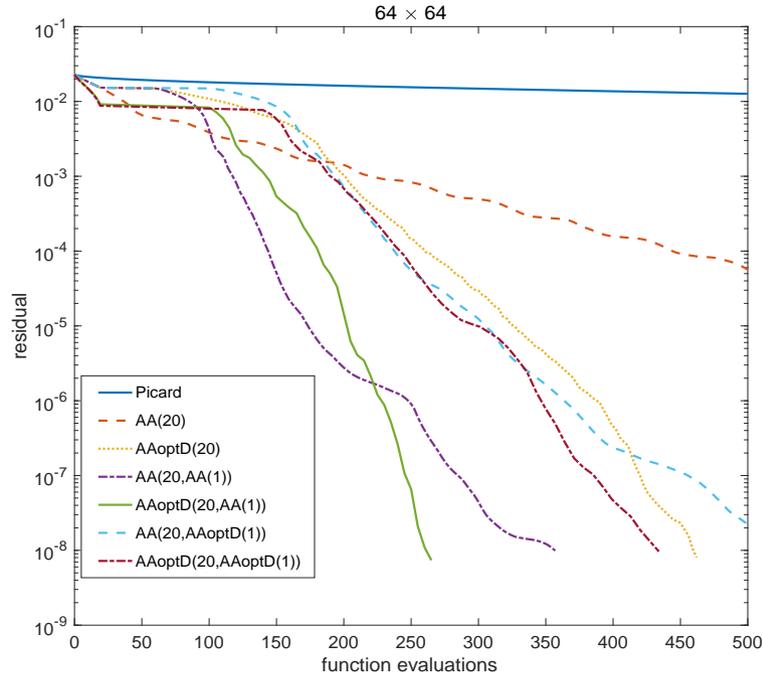}
  \caption{Non-stationary AA methods solve the Bratu problem: residual vs function evaluations (multiplicative composition).}
  \label{fig:fig_new_4}
\end{figure}
\begin{figure}[htbp]
  \centering
  \includegraphics[height=3.8in, width=4.5in]{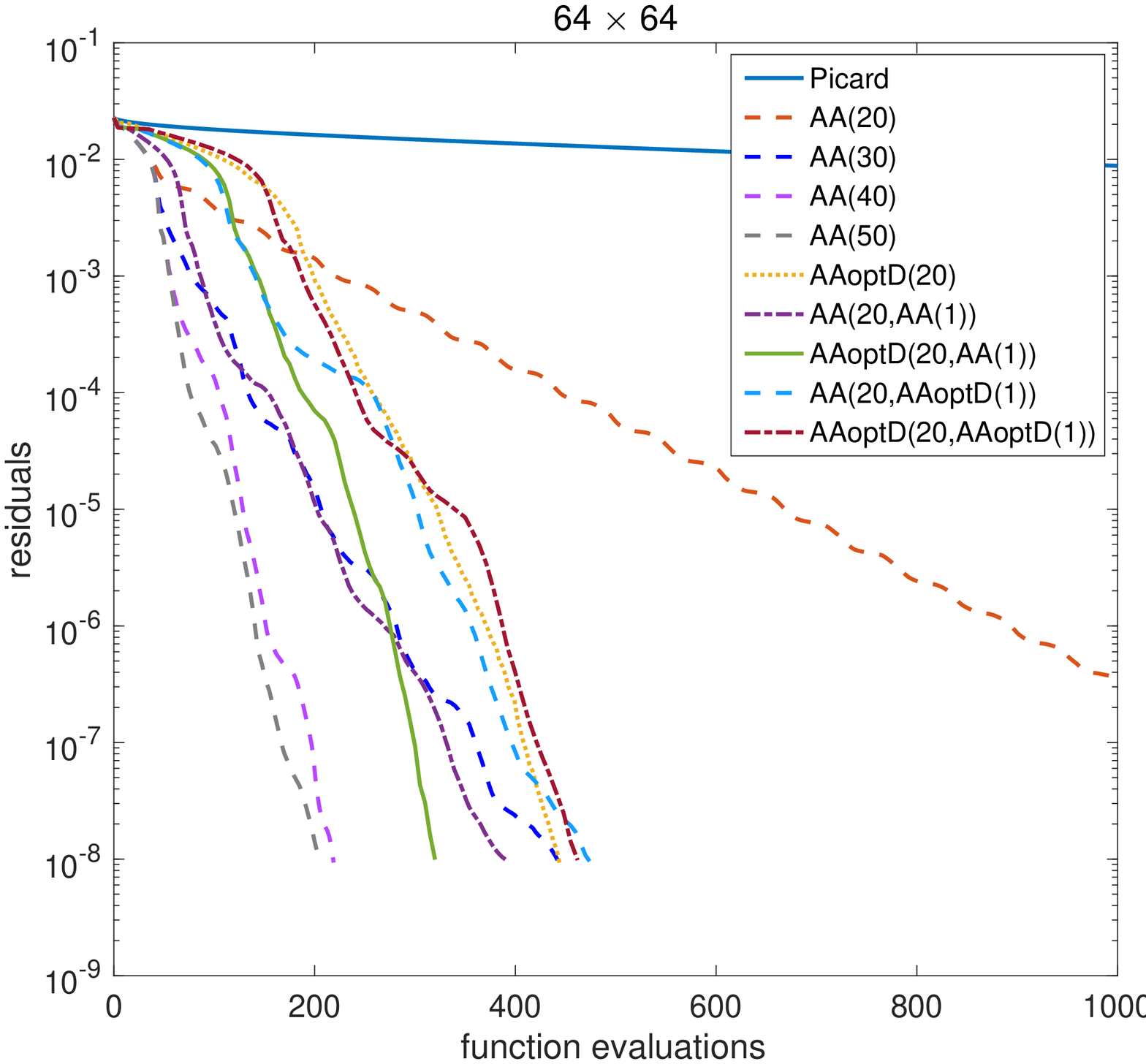}
  \caption{Non-stationary AA methods solve the Bratu problem (multiplicative composition).}
  \label{fig:fig_new_k6}
\end{figure}
% \begin{figure}[htbp]
%   \centering
%   \includegraphics[height=3.6in, width=4.5in]{paper_bratu_64_mul_50.eps}
%   \caption{Non-stationary AA methods solve the Bratu problem: $m=50$ and $n=1$.}
%   \label{fig:fig_new_k5}
% \end{figure}

\begin{table}[ht]
\caption{Top 4: total time used in solving the Bratu problem on a $64 \times 64$ grid.} % title of Table
\centering % used for centering table
\begin{tabular}{c c c c} % centered columns (4 columns)
\hline\hline %inserts double horizontal lines
Methods& times (s) &\ \ Methods& times (s) \\ [0.1ex] % inserts table
%heading
\hline % inserts single horizontal line
AAoptD(20,AA(1)) & 65.46 &AAoptD(20) & 113.39 \\
AA(20,AA(1)) & 107.94 & AA(20+1) & 116.27 \\
AAoptD(20) & 112.86  & AAoptD(20)+AA(1) & 130.46 \\
AAoptD(20,AAoptD(1)) & 114.70 & AA(20)+AAoptD(1) & 173.03 \\ [0.5ex] % [1ex] adds vertical space
\hline %inserts single line
\end{tabular}
\label{table:t2} % is used to refer this table in the text
\end{table}
%%%%%non linear Poisson%%%%
\begin{example}\textbf{The\ stationary\ nonlinear\ convection-diffusion\ problem.} Solve the following 2D nonlinear convection-diffusion equation in a square region:
$$\epsilon(-u_{xx}-u_{yy})+(u_x+u_y)+ku^2=f(x,y),\ \  (x,y)\in D=[0,1]\times[0,1] $$
with the source term 
$$f(x,y)=2\pi^2\sin(\pi x)\sin(\pi y)$$
and zero boundary conditions: $u(x,y)=0$ on $\partial D$.
\end{example}

In this numerical experiment, we use a centered-difference discretization on a $32\times 32$ grid. We choose $\epsilon=1$, $\epsilon=0.1$ and $\epsilon=0.01$, respectively. Those $\epsilon$ values indicate the competition between the diffusion and convection effect. We take $k=3$ in the above problem and use $u_0=(1,1,\cdots,1)^{T}$ as an initial approximate solution in all cases. As in solving the Bratu problem, the same preconditioning strategy is used here. 

For $\epsilon=1$, from \Cref{fig:fig_5}, \Cref{fig:fig_6}, \Cref{fig:fig_new60} and \Cref{table:t3}, we observe similar results that the non-stationary methods $AAoptD(5,AA(1))$ and $AA(5,AAoptD(1))$ perform better than stationary $AA(5)$. For $\epsilon=0.1$, from \Cref{fig:fig_new61}, we find that the Picard iteration does not converge anymore. However, Anderson acceleration methods with a small window size already work. Moreover, some of our proposed non-stationary AA methods (for example, $AAoptD(1,AA(1))$ $AA(1,AAoptD(1))$ and $AA(1,AA(1))$) perform better than the stationary $AA(1)$ method. For $\epsilon=0.01$, from \Cref{fig:fig_new62} and \Cref{fig:fig_new63}, we see that the Picard method, the stationary $AA(1)$ method and non-stationary method $AA(1,AA(1))$ method do not converge while other non-stationary methods still converge. However, for the converging methods, there are some wiggles in the numerical approximate, see the bottom figures in \Cref{fig:fig_new64}. This may result from using the central difference scheme for the convection term. To solve this problem, we then use the the upwind scheme (backward difference) for the convection term. The results are shown in \Cref{fig:fig_new621} and \Cref{fig:fig_new641}. All acceleration methods converge. $AA(1,AA(1))$ performs best. $AAoptD(1,AA(1))$ and $AA(1,AAoptD(1))$ are comparable to the stationary $AA(1)$ method. Moreover, there are no wiggles in the numerical approximate when applying upwind scheme for the convection term.

\begin{figure}[htbp]
  \centering
  \includegraphics[height=3.8in, width=4.5in]{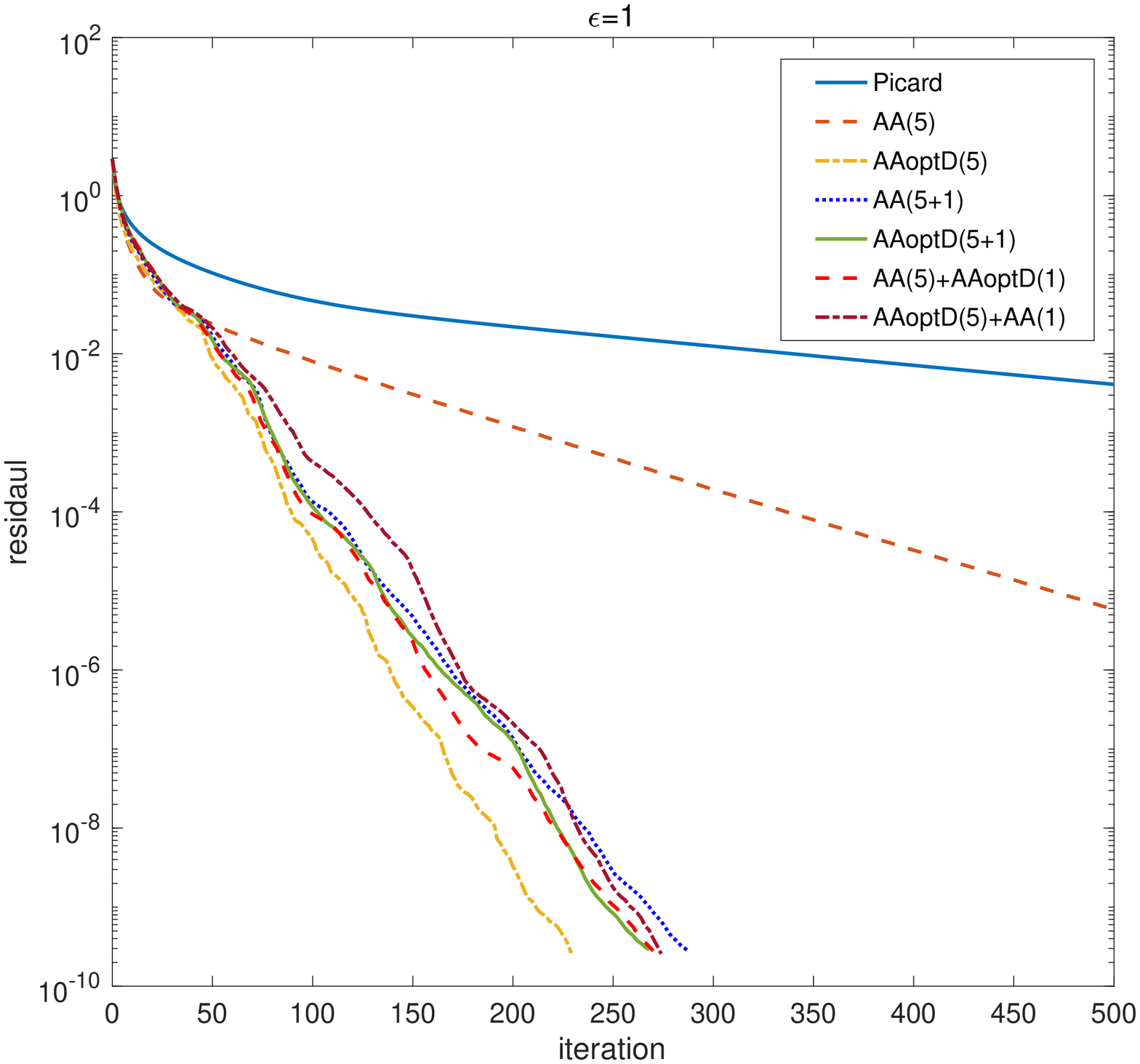}
  \caption{Solve the convection-diffusion problem: additive composition with $\epsilon=1$.}
  \label{fig:fig_5}
\end{figure}

\begin{figure}[htbp]
  \centering
  \includegraphics[height=3.8in, width=4.5in]{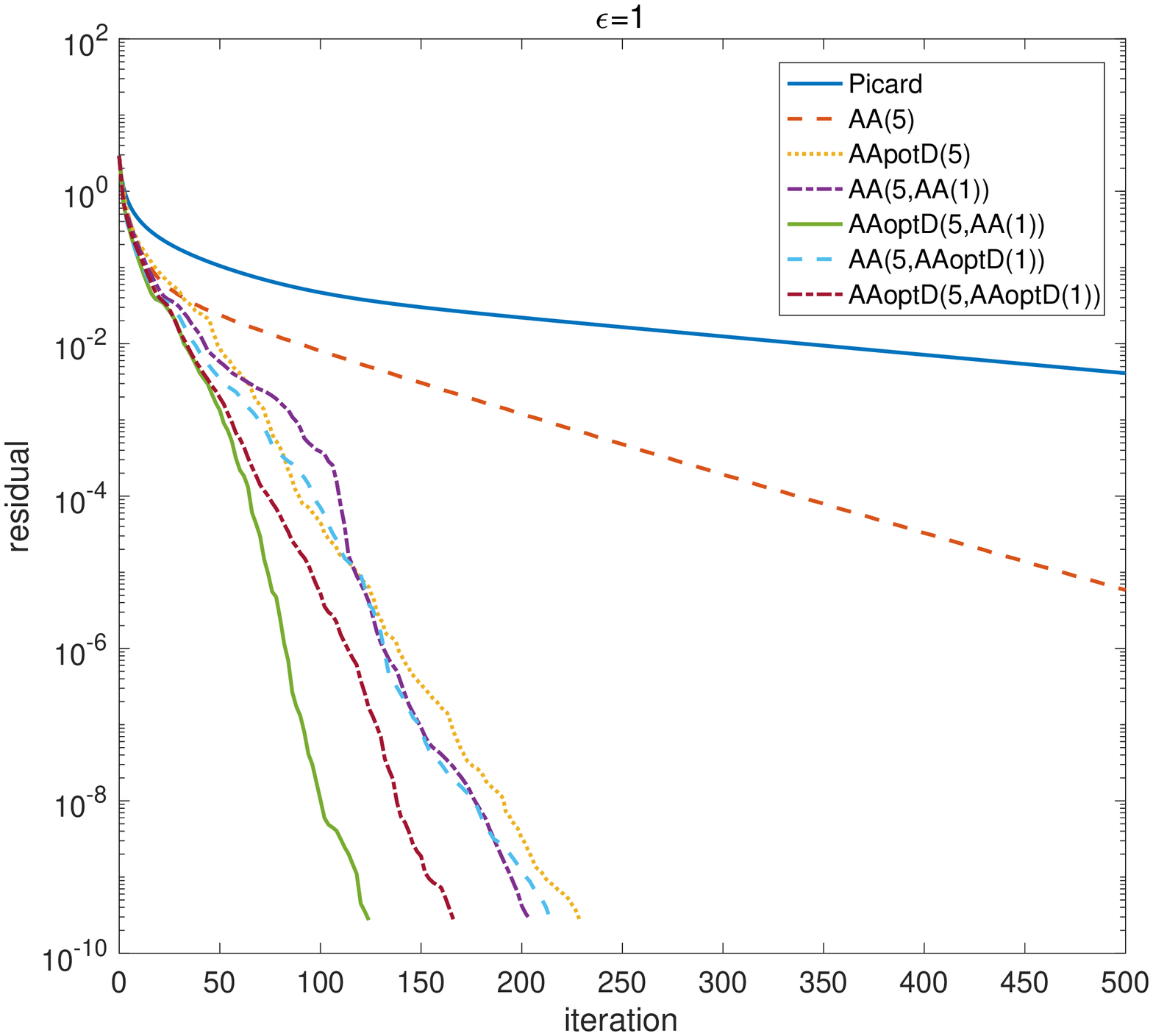}
  \caption{Solve the convection-diffusion problem: multiplicative composition with $\epsilon=1$.}
  \label{fig:fig_6}
\end{figure}

\begin{figure}[htbp]
  \centering
  \includegraphics[height=3.8in, width=4.5in]{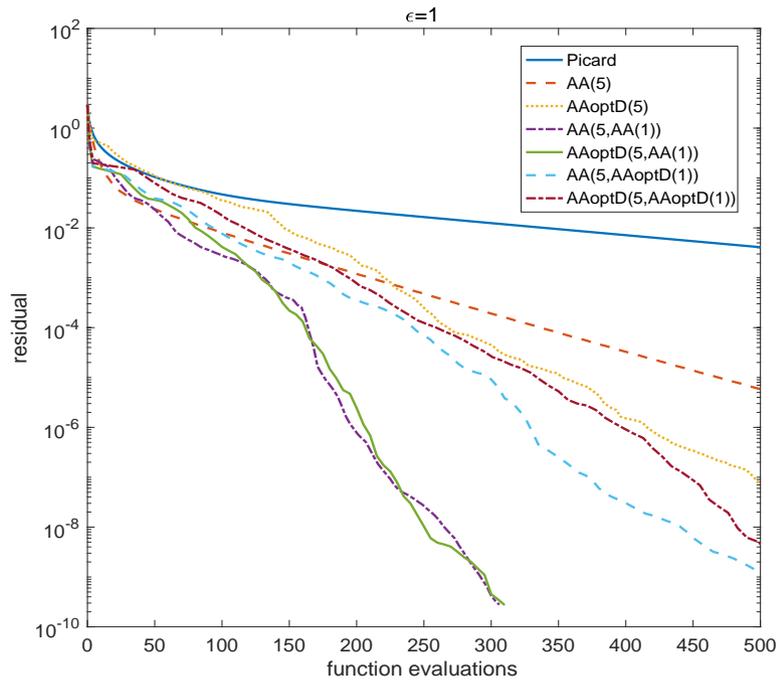}
  \caption{Solve the convection-diffusion problem with $\epsilon=1$: function evaluations.}
  \label{fig:fig_new60}
\end{figure}

\begin{table}[ht]
\caption{Top 4: total time used in solving the convection-diffusion problem with $\epsilon=1$.} % title of Table
\centering % used for centering table
\begin{tabular}{c c c c} % centered columns (4 columns)
\hline\hline %inserts double horizontal lines
Methods& times (s) &\ \ Methods& times (s) \\ [0.1ex] % inserts table
%heading
\hline % inserts single horizontal line
AAoptD(5,AA(1)) & 2.26 &AA(5+1) & 3.29 \\
AA(5,AA(1)) & 2.33 & AAoptD(5) & 4.80 \\
AA(5,AAoptD(1)) & 3.92  & AAoptD(5)+AA(1) & 4.93 \\
AAoptD(5,AAoptD(1)) & 4.14 & AA(5)+AAoptD(1) & 5.00 \\ [0.5ex] % [1ex] adds vertical space
\hline %inserts single line
\end{tabular}
\label{table:t3} % is used to refer this table in the text
\end{table}

\begin{figure}[htbp]
  \centering
  \includegraphics[height=3.8in, width=4.5in]{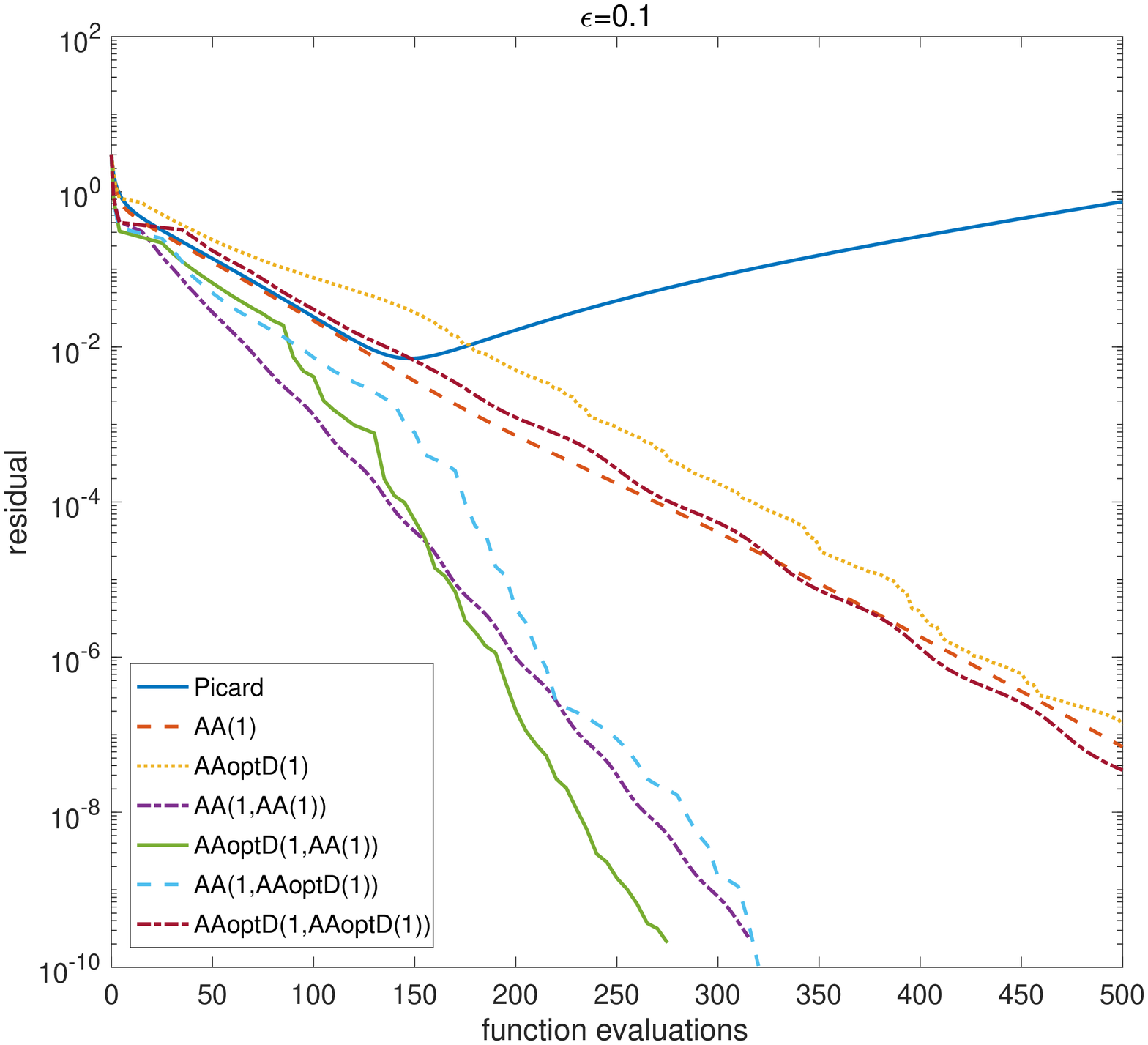}
  \caption{Solve the convection-diffusion problem: multiplicative composition with $\epsilon=0.1$.}
  \label{fig:fig_new61}
\end{figure}

\begin{figure}[htbp]
  \centering
  \includegraphics[height=3.8in, width=4.5in]{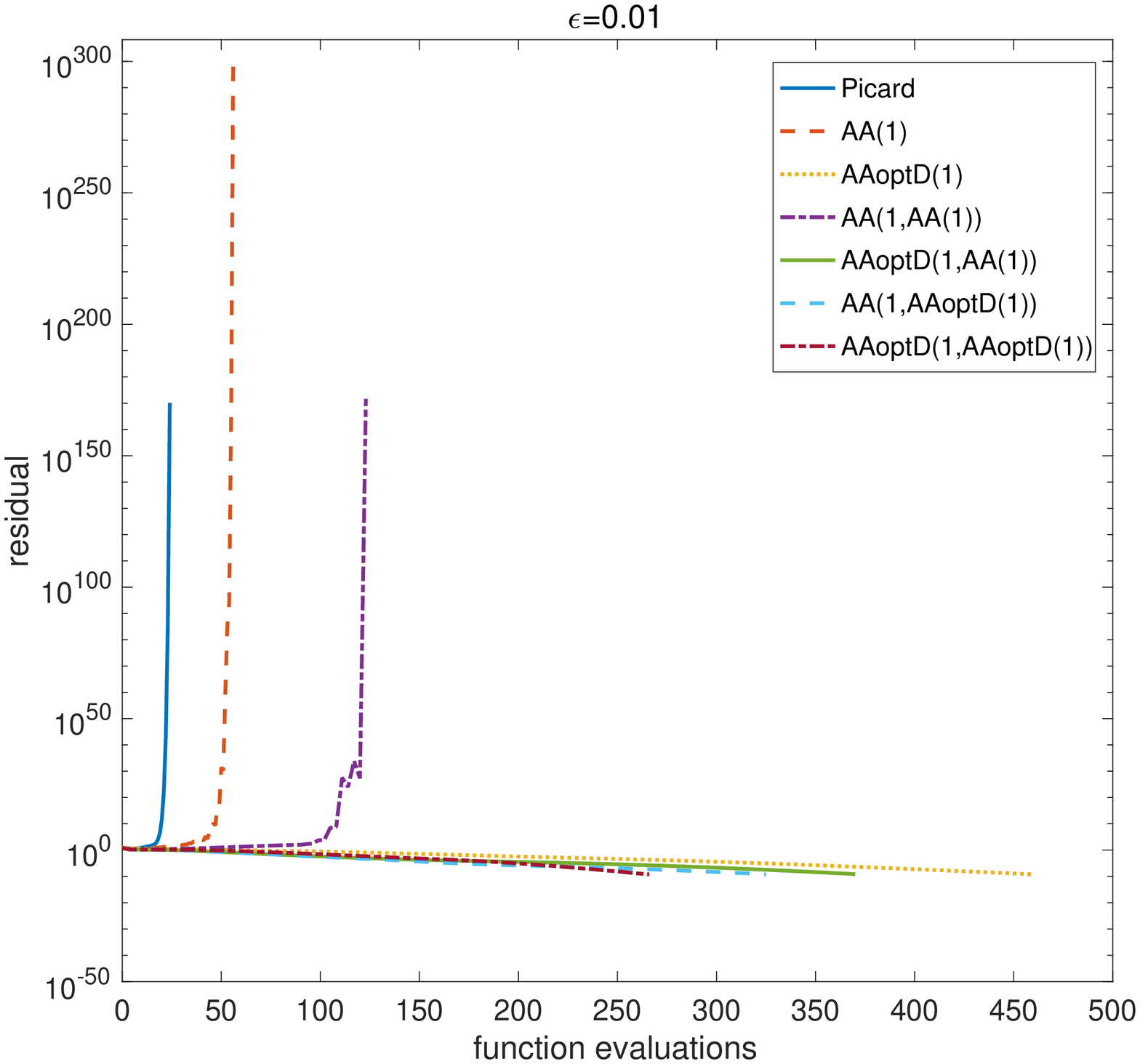}
  \caption{Solve the convection-diffusion problem: multiplicative composition with $\epsilon=0.01$.}
  \label{fig:fig_new62}
\end{figure}

\begin{figure}[htbp]
  \centering
  \includegraphics[height=3.8in, width=4.5in]{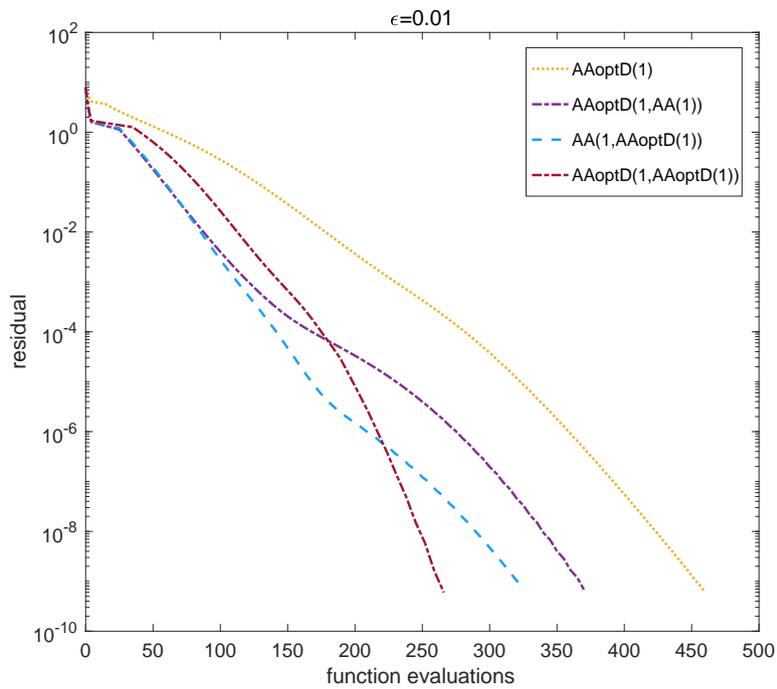}
  \caption{Solve the convection-diffusion problem with $\epsilon=0.01$ (zoom in).}
  \label{fig:fig_new63}
\end{figure}

\begin{figure}[htbp]
  \centering
  \includegraphics[height=3.8in, width=4.5in]{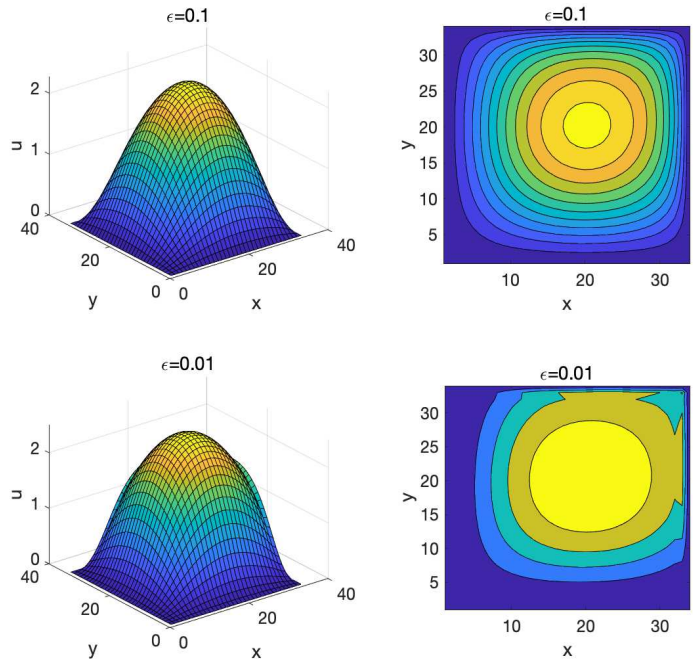}
  \caption{Solution the convection-diffusion problem with $\epsilon=0.1$ and $\epsilon=0.01$.}
  \label{fig:fig_new64}
\end{figure}

\begin{figure}[htbp]
  \centering
  \includegraphics[height=3.8in, width=4.5in]{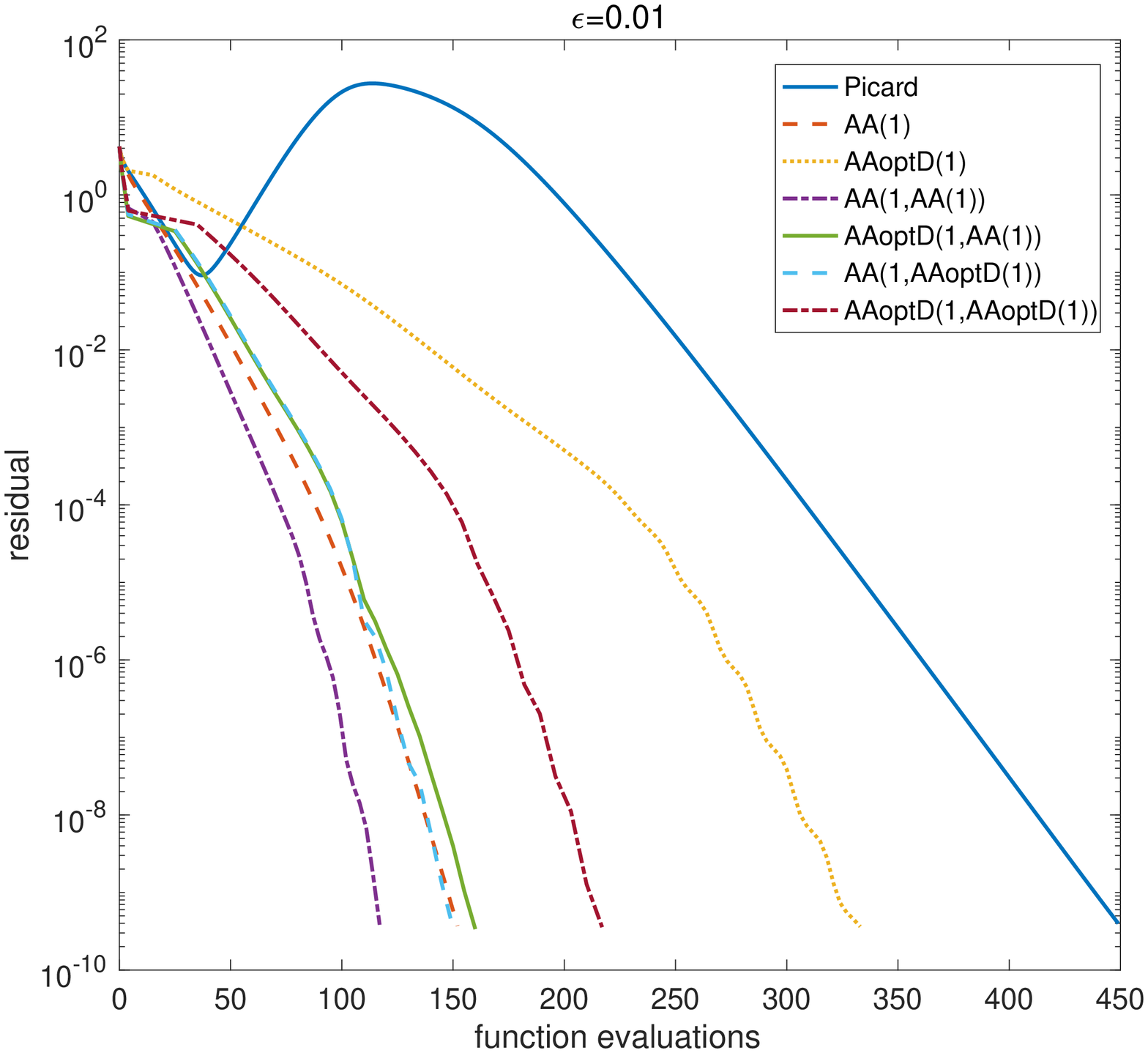}
  \caption{Solve the convection-diffusion problem: multiplicative composition with $\epsilon=0.01$.}
  \label{fig:fig_new621}
\end{figure}

\begin{figure}[htbp]
  \centering
  \includegraphics[height=2in, width=4.5in]{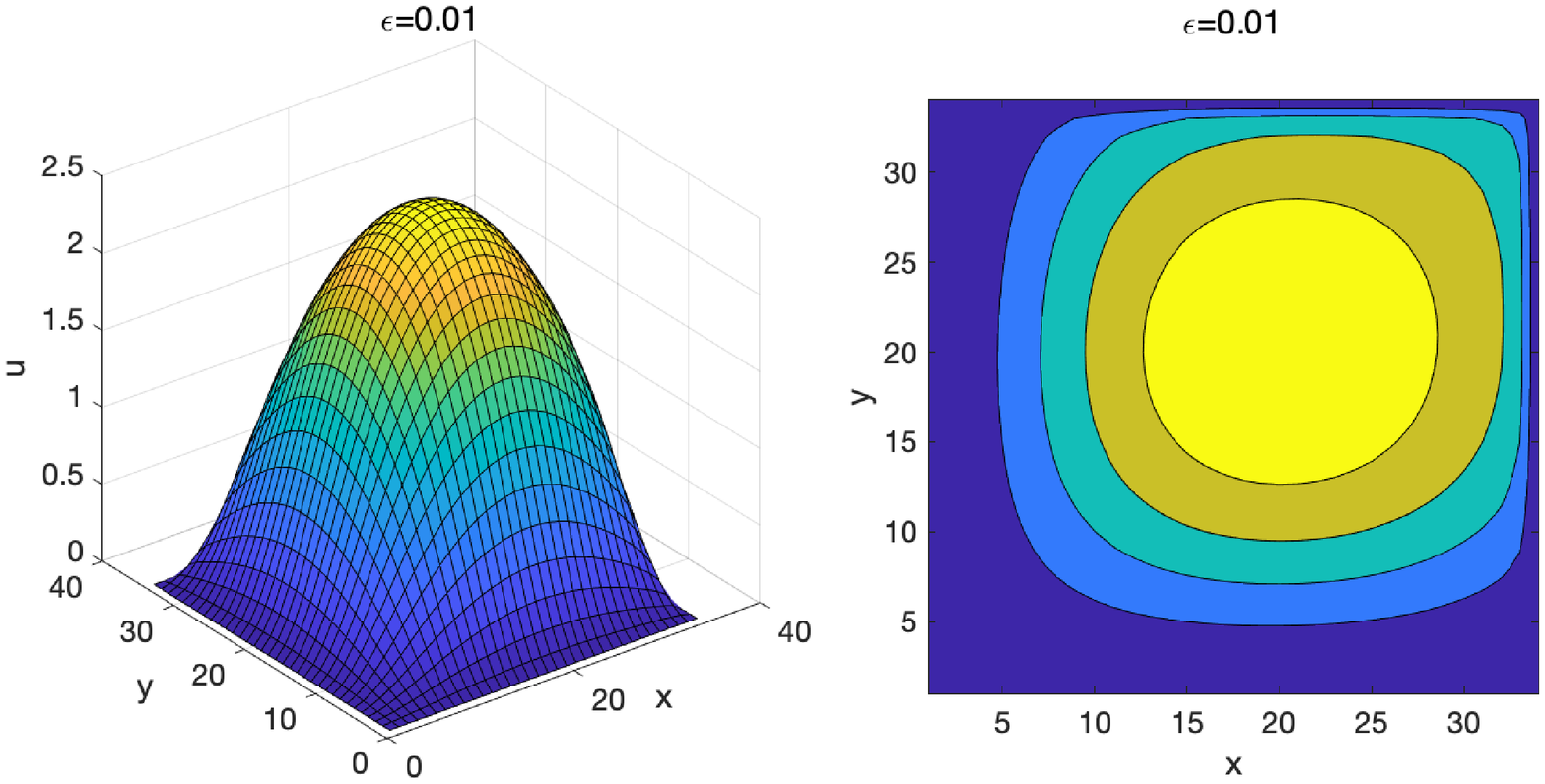}
  \caption{Solution the convection-diffusion problem: upwind scheme for convection term,  $\epsilon=0.01$.}
  \label{fig:fig_new641}
\end{figure}
% \begin{figure}[htbp]
%   \centering
%   \label{fig:f_new63}\includegraphics[height=3.6in, width=4.5in]{paper_convection_diffusion_ep_001.eps}
%   \caption{Solve the convection-diffusion problem with $\epsilon=0.01$ and a larger window size.}
%   \label{fig:fig_new63}
% \end{figure}

% %%%%%%%%%%%%%%%%%%% solve Ax=b
Our next example is about solving a linear system $Ax=b$. As proved by Walker and Ni in \cite{WaNi2011}, AA without truncation is ``essentially equivalent'' in a certain sense to the GMRES method for linear problems.
\begin{example} \textbf{The\ linear\ equations.} Apply AA and AAoptD to solve the following linear system $Ax=b$, where $A$ is
\begin{equation*}
A= 
\begin{pmatrix}
2 & -1 & \cdots & 0 & 0 \\
-1 & 2 & \cdots & 0 & 0 \\
\vdots  & \vdots  & \ddots & \vdots & \vdots \\
0 & 0 & \cdots & 2 &-1\\
0 & 0 & \cdots & -1 &2
\end{pmatrix},\ \ \ A\in R_{n\times n} 
\end{equation*}
and
$$b=(1,\cdots,1)^{T}.$$

Choose $n=100$ so that a large window size $m$ is needed in Anderson Acceleration. We also note here that the Picard iteration does not work for this problem. 
\end{example}

In our test, the initial guess is $x_0=(0,\cdots,0)^{T}$. The result is shown in \Cref{fig:fig_7}. From \Cref{fig:fig_7}, we see that the fully non-stationary Anderson acceleration methods work much better than the stationary AA method. This example also indicates that our proposed fully non-stationary AA can be also used to solve linear systems.
\begin{figure}[htbp]
  \centering
  \includegraphics[height=3.8in, width=4.5in]{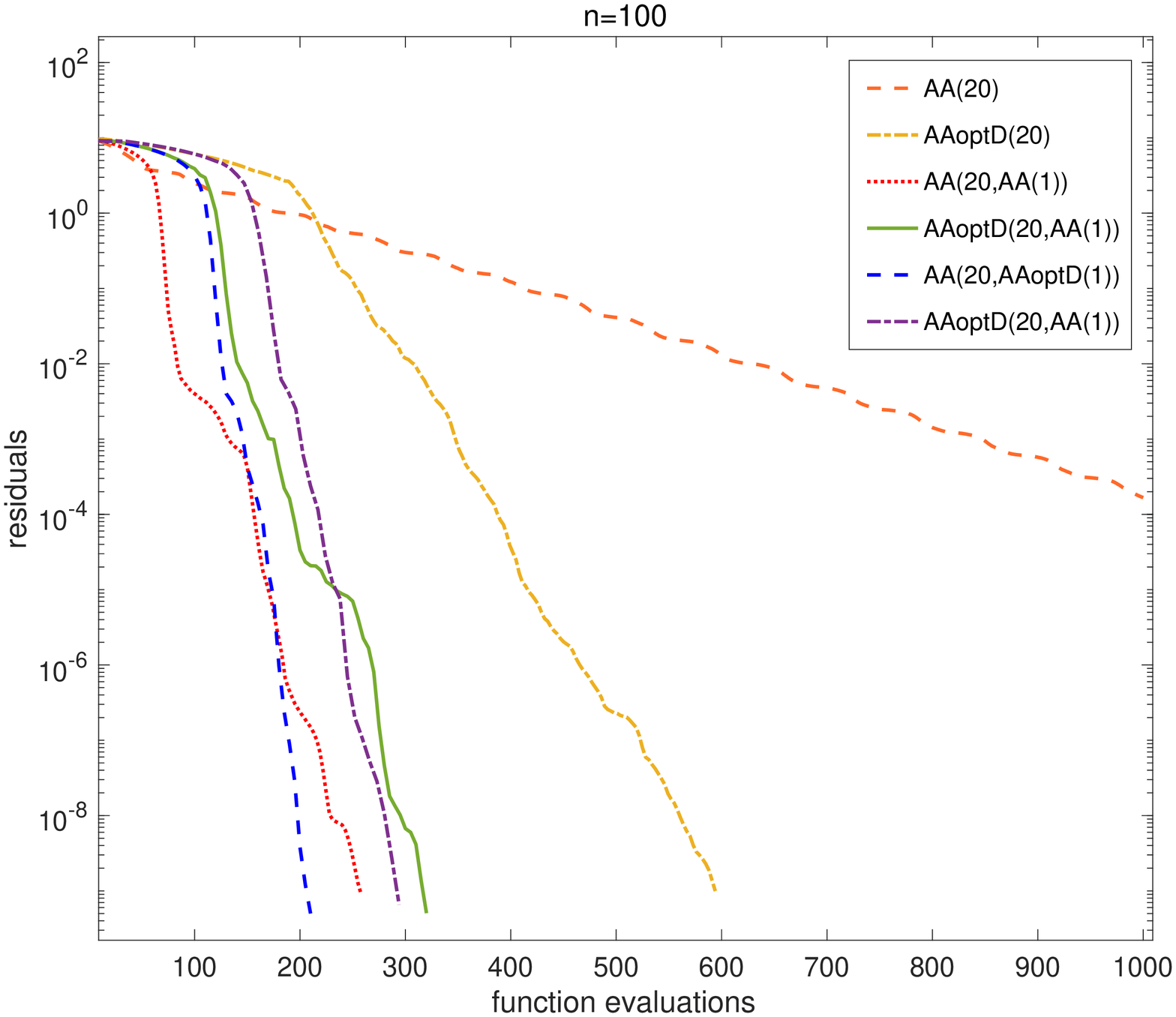}
  \caption{Non-stationary AA: solving a linear problem $Ax=b$ with $n=100$.}
  \label{fig:fig_7}
\end{figure}

\section{Conclusions}
\label{sec:conclusions}
In the present work, we propose and analyze a set of fully non-stationary Anderson acceleration algorithms with dynamic window sizes and optimized damping factors to further speed up linear and nonlinear iterations. In general, some of these non-stationary AA algorithms converge faster than the stationary AA method and they may significantly reduce the memory requirements and time to find the solution. For future guidance of choosing these non-stationary AA methods, our numerical results indicate that $AAoptD(m,AA(n))$ (with only one inner loop iteration and some very small inner window size $n$) usually converges much faster than stationary AA(m) and other non-stationary AA methods. This is not surprising since the local convergence rate of AA with damping factors is $\theta_k((1-\beta_{k-1})+\beta_{k-1}\kappa)$.\cite{Ev2020} However, two extra function evaluations are needed in each iteration. Therefore, if the function evaluation is not very expensive in related problems or when the storage of the computer is very limited, we suggest trying these types of fully non-stationary AA algorithms first. Otherwise, non-stationary AA methods like $AA(m,AA(n))$ (with only one inner loop iteration and some very small inner window size $n$) might be the good options since these algorithms usually converge faster than stationary $AA(m)$ and their total amount of work is less than that of applying $AA(m)$ twice. There is a lot of variety in these fully non-stationary Anderson acceleration methods, our future work will continue to explore the behaviors of these methods and test them on other broader problems.

\section*{Acknowledgments}
This work was partially supported by the National Natural Science Foundation of China [grant number 12001287]; the Startup Foundation for Introducing Talent of Nanjing University of Information Science and Technology [grant number 2019r106]; The first author Kewang Chen also gratefully acknowledge the financial support for his doctoral study provided by the China Scholarship Council (No. 202008320191).

\clearpage

\nocite{*}% Show all bib entries - both cited and uncited; comment this line to view only cited bib entries;
\bibliography{wileyNJD-AMA}

\end{document}